\def\newaliasedtheorem#1[#2]#3{
  \newaliascnt{#1@alt}{#2}
  \newtheorem{#1}[#1@alt]{#3}
  \expandafter\newcommand\csname #1@altname\endcsname{#3}
}
\numberwithin{equation}{section}
\newtheoremstyle{slanted}{\topsep}{\topsep}{\slshape}{}{\bfseries}{.}{.5em}{}
\theoremstyle{plain}
\newtheorem{theorem}{Theorem}[section]
\theoremstyle{definition}
\theoremstyle{remark}
\newcommand{\diam}{\text{diam}}
\newcommand{\dist}{\text{dist}}
\begin{document}
\title{Some remarks on the Gehring-Hayman theorem}

\author{Sari Rogovin \thanks{Hakarinteen koulu, Tammisaari, Finland, \url{sari.rogovin@gmail.com}.}, Hyogo Shibahara
\thanks{Department of Mathematical Sciences, University of Cincinnati, Cincinnati, OH 45221, U.S.A., \url{shibahho@mail.uc.edu}.} and Qingshan Zhou \thanks{School of Mathematics and Big Data, Foshan University,  Foshan, Guangdong 528000, People's Republic of China, \url{qszhou1989@163.com; q476308142@qq.com}.}} 
\maketitle

\begin{abstract}
In this paper we provide new characterizations of the Gehring-Hayman theorem from the point of view of Gromov boundary and uniformity. We also determine the critical exponents for the uniformized space to be a uniform space in the case of the hyperbolic spaces, the model spaces $\mathbb{M}^{\kappa}_n$ of the sectional curvature $\kappa<0$ with the dimension $n \geq 2$ and hyperbolic fillings.
\end{abstract}

\noindent{\small \emph{Keywords and phrases} : Gromov hyperbolic space, uniform domain, uniform space, uniformization.}
\vskip.2cm
\noindent{\small Mathematics Subject Classification (2020) : Primary: 53C23; Secondary: 30L15}
%\tableofcontents

\allowdisplaybreaks{

\section{Introduction}
The aim of this paper is to provide some remarks on the uniformizations of $\delta$-Gromov hyperbolic spaces developed by Bonk, Heinonen and Koskela in \cite{BHK}. For a $\delta$-Gromov hyperbolic space $(X, d)$ with a distinguished point $p \in X$, they introduced the metric defined by  
\begin{equation}\label{uniformization metric}
	d_{\epsilon}(x, y):= \inf\limits_{\gamma}\int_{\gamma}\rho_{\epsilon} \,ds,
\end{equation}
where $\rho_{\epsilon}(\cdot)=e^{-\epsilon d(p, \ \cdot \ )}$ and the infimum is taken over all rectifiable curves $\gamma$ from $x$ to $y$. The metric space $(X, d_{\epsilon})$ is called \emph{a uniformized space of $X$} and is denoted simply by $X^{\epsilon}$. This uniformization technique has led to numerous applications, see \cite{BB}, \cite{BHK} and \cite{BBS} and references therein. Another uniformization procedure using Busemann functions has been also established, see \cite{B} and \cite{Z}.

 It was shown in \cite[Proposition 4.5]{BHK} that if a $\delta$-Gromov hyperbolic space $(X, d)$ is uniformized with a sufficiently small parameter $\epsilon>0$, then $X^{\epsilon}$ is a uniform space. The key to prove the above result was to show the following Gehring-Hayman theorem.
\begin{theorem}(\cite[Section 5]{BHK}\label{GH theorem})
	Let $(X, d)$ be a $\delta$-Gromov hyperbolic space. Then there exists $\epsilon(\delta)>0$ such that for every $0<\epsilon \leq \epsilon(\delta)$, there exists $A>0$ such that for each pair of points $x, y \in X^{\epsilon}$,
	\begin{equation}\label{GH theorem}
		l_{d_{\epsilon}}([x, y]) \leq A d_{\epsilon}(x, y),
	\end{equation}
	where $[x, y]$ is a geodesic curve with respect to $d$ from $x$ to $y$ and $l_{d_{\epsilon}}([x, y])$ denotes the length of $[x, y]$ with respect to $d_{\epsilon}$. 
\end{theorem}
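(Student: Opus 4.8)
The plan is to establish the a priori stronger inequality $l_{d_\epsilon}([x,y]) \le A\, l_{d_\epsilon}(\sigma)$ for \emph{every} rectifiable curve $\sigma$ in $X$ joining $x$ to $y$; passing to the infimum over such $\sigma$ then yields the claim. One half of this is elementary and uses no hyperbolicity. Parametrising $[x,y]$ by $d$-arclength on $[0,L]$ with $L=d(x,y)$, the triangle inequality gives $d\big(p,[x,y](s)\big) \ge \max\{d(p,x)-s,\ d(p,y)-(L-s)\}$, and integrating $\rho_\epsilon=e^{-\epsilon d(p,\cdot)}$ along $[x,y]$ one obtains
\[
	l_{d_\epsilon}([x,y]) \;\le\; \frac1\epsilon\Big(2e^{-\epsilon (x|y)_p} - e^{-\epsilon d(p,x)} - e^{-\epsilon d(p,y)}\Big)\;\asymp\; e^{-\epsilon (x|y)_p}\min\{d(x,y),\,1/\epsilon\},
\]
where $(x|y)_p=\tfrac12\big(d(p,x)+d(p,y)-d(x,y)\big)$ is the Gromov product. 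Hence the entire content of the theorem is the matching lower bound $d_\epsilon(x,y)\gtrsim e^{-\epsilon (x|y)_p}\min\{d(x,y),1/\epsilon\}$, i.e.\ a lower estimate for $l_{d_\epsilon}(\sigma)$ valid for every competitor $\sigma$; this is where $\delta$-hyperbolicity and the smallness of $\epsilon$ must come in, and it is the real work.

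Two tools drive the lower bound. First, the primitive $u:=\tfrac1\epsilon e^{-\epsilon d(p,\cdot)}$ is $\rho_\epsilon$-Lipschitz along curves (as $d(p,\cdot)$ is $1$-Lipschitz, $|u'|\le e^{-\epsilon d(p,\cdot)}=\rho_\epsilon$), so $l_{d_\epsilon}(\sigma)$ dominates the total variation of $u$ along $\sigma$; choosing $w\in\sigma$ with $m_\sigma:=d(p,w)=\min_\sigma d(p,\cdot)$, so that $w$ maximises $u$ on $\sigma$, gives $l_{d_\epsilon}(\sigma)\ge |u(x)-u(w)|+|u(w)-u(y)|=\tfrac1\epsilon\big(2e^{-\epsilon m_\sigma}-e^{-\epsilon d(p,x)}-e^{-\epsilon d(p,y)}\big)$. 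Second, the exponential divergence of geodesics in a $\delta$-hyperbolic space: there is $C=C(\delta)\ge1$ such that any curve joining $x$ to $y$ and avoiding a ball $B(w,r)$ with $w\in[x,y]$ and $r\le\dist(w,\{x,y\})$ has $d$-length at least $C^{-1}e^{r/C}$; we shall also use that the sublevel sets $\{d(p,\cdot)\le r\}$ are $O(\delta)$-quasiconvex, so that radial (= nearest-point) projection onto them is a coarse contraction distorting Gromov products by $O(\delta)$. With these in hand, some cases are routine: if $d(x,y)\lesssim 1/\epsilon$, or more generally if $[x,y]$ is \emph{essentially radial} in that $\min\{d(p,x),d(p,y)\}\le (x|y)_p+2/\epsilon$, one argues directly with the initial subarc of $\sigma$ inside a suitable ball about $x$, on which $\rho_\epsilon$ is bounded below and whose $d$-length is bounded below, and no hyperbolicity beyond the above is needed.

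There remains the main case, of a geodesic that genuinely dips towards $p$, say $\min\{d(p,x),d(p,y)\}\ge (x|y)_p+2/\epsilon$; let $z_0\in[x,y]$ be nearest $p$ and put $t:=d(p,z_0)$, so $t\in[(x|y)_p,(x|y)_p+O(\delta)]$ and $l_{d_\epsilon}([x,y])\lesssim \tfrac1\epsilon e^{-\epsilon t}$. Fix a large $C'=C'(\delta)$ and split on $m_\sigma$. If $m_\sigma\le t+C'\log(1/\epsilon)$, then $e^{-\epsilon m_\sigma}\ge e^{-\epsilon t}\,\epsilon^{\epsilon C'}$ with $\epsilon^{\epsilon C'}\to1$ as $\epsilon\to0$, so the variation bound gives $l_{d_\epsilon}(\sigma)\ge \tfrac1\epsilon e^{-\epsilon t}\big(2\epsilon^{\epsilon C'}-O(e^{-2})\big)\gtrsim \tfrac1\epsilon e^{-\epsilon t}$ once $\epsilon$ is small. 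If $m_\sigma>t+C'\log(1/\epsilon)$, then $\sigma$ avoids $B\big(z_0,\,m_\sigma-t-O(\delta)\big)$, so by exponential divergence $\sigma$ is very long; projecting $\sigma$ radially onto the balls $\{d(p,\cdot)\le r\}$ for $r\ge m_\sigma$ and applying the divergence estimate to the projected curves, one shows that a portion of $\sigma$ of $d$-length $\gtrsim e^{(m_\sigma-t)/C}$ lies at heights $\le r$ for some $r$ not much larger than $m_\sigma$, hence carrying $d_\epsilon$-weight $\gtrsim e^{-\epsilon r}e^{(m_\sigma-t)/C}\gtrsim e^{-\epsilon t}\,e^{c(m_\sigma-t)}$ for some $c=c(\delta)>0$, provided $\epsilon$ is small relative to $1/C$. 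Since $m_\sigma-t>C'\log(1/\epsilon)$ this exceeds $\tfrac1\epsilon e^{-\epsilon t}$ once $C'$ is large; the resulting two constraints on $\epsilon$ — that $\epsilon^{\epsilon C'}$ be near $1$ and that $\epsilon$ be small relative to $1/C$ — pin down the threshold $\epsilon(\delta)$.

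The main obstacle is exactly this last case: a curve wandering far above $p$ is cheap per unit length, because $\rho_\epsilon$ is small there, yet $\delta$-hyperbolicity forces it to be exponentially long in the height of its excursion, and one must show that enough of that length is deposited at heights where $\rho_\epsilon$ has not yet decayed too much for the $d_\epsilon$-length to remain comparable to $l_{d_\epsilon}([x,y])$. Turning this heuristic into a clean estimate requires careful bookkeeping with the radial projections and with the divergence bound applied level by level, and it is precisely the competition between the exponential length gain $e^{(m_\sigma-t)/C}$ and the exponential density loss $e^{-\epsilon m_\sigma}$ that both forces $\epsilon$ to be small and determines the admissible range $0<\epsilon\le\epsilon(\delta)$.
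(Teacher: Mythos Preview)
The paper does not prove this theorem. Theorem~1.1 is quoted from \cite[Section~5]{BHK} as a known result and serves only as background motivation; the paper's own contribution is Theorem~\ref{GH decomposition 1}, and no argument for Theorem~1.1 appears anywhere in the text. There is therefore no ``paper's own proof'' to compare your proposal against.

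Assessed on its own terms, your sketch is a reasonable outline of the Bonk--Heinonen--Koskela argument, and the easy half is correct: the upper bound $l_{d_\epsilon}([x,y])\lesssim e^{-\epsilon(x|y)_p}\min\{d(x,y),1/\epsilon\}$ by direct integration is standard, and your potential-function trick with $u=\tfrac1\epsilon e^{-\epsilon d(p,\cdot)}$ cleanly disposes of competitors $\sigma$ that dip to small $m_\sigma$. The genuine gap is the second sub-case, $m_\sigma>t+C'\log(1/\epsilon)$. Exponential divergence does give $l_d(\sigma)\gtrsim e^{(m_\sigma-t)/C}$, but this says nothing about \emph{where} that length sits: a priori $\sigma$ could deposit almost all of it at heights far above $m_\sigma$, where $\rho_\epsilon$ is arbitrarily small, so the asserted lower bound $l_{d_\epsilon}(\sigma)\gtrsim e^{-\epsilon r}e^{(m_\sigma-t)/C}$ ``for some $r$ not much larger than $m_\sigma$'' is exactly the statement that needs proof, not a consequence of what you have written. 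Your invocation of radial projections ``level by level'' is a gesture at the right mechanism, but as it stands it is a restatement of the goal rather than an argument; in \cite{BHK} this step is carried out by a dyadic decomposition of $\sigma$ into subarcs together with the thin-triangle inequality applied piecewise, and it is this bookkeeping that produces both the lower bound and the threshold $\epsilon(\delta)$. You have correctly identified the tension (exponential length gain versus exponential weight loss) and correctly located where $\epsilon\le\epsilon(\delta)$ enters, but the main estimate is asserted rather than proved.
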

 In this paper we say that \emph{the Gehring-Hayman theorem holds for $X^{\epsilon}$} if \eqref{GH theorem} holds for all $x, y \in X^{\epsilon}$. We prove that $X^{\epsilon}$ being uniform implies the Gehring-Hayman theorem for $X^{\epsilon}$ through quasihyperbolization, see Subsection \ref{From uniformity to GH}. We also study the following two localized Gehring-Hayman properties.
\begin{definition}[Localized Gehring-Hayman property]\label{GH for Gromov sequences}
	We say that the \emph{Gehring-Hayman property for Gromov sequences} holds if for each Gromov sequence $(x_n)_n$, there exists $C \geq 1$ such that for all $n, m \in \mathbb{N}$
	\begin{equation}\label{GH for sequences}
		l_{d_{\epsilon}}([x_n, x_m]) \leq C d_{\epsilon}(x_n, x_m).
	\end{equation}
	Note that the constant $C$ may depend on $(x_n)_n$. See Remark \ref{equivalence relation for Gromov sequences} for the equivalence relation among Gromov sequences. We also say that  the \emph{Gehring-Hayman property for metric boundary points} holds if for all $(x_n)_n$ and $x \in \partial_{d_{\epsilon}} X^{\epsilon}$ such that $d_{\epsilon}(x_n, x) \to 0$ as $n \to  \infty$, there exists $C \geq 1$ such that \eqref{GH for sequences} holds for all $n, m \in \mathbb{N}$.
\end{definition}
It is clear that the original Gehring-Hayman theorem implies the localized Gehring-Hayman properties and the bijectivity of the canonical boundary map $\Phi : \partial_{G} X \to \partial_{d_{\epsilon}} X^{\epsilon}$, see \cite[Proposition 4.13]{BHK} and also Remark \ref{Def of boundary map} for the construction of the map $\Phi$. We obtain further characterizations of  the Gehring-Hayman theorem from these properties. The first two conditions in Theorem \ref{GH decomposition 1} can be seen as a boundary pointwise decomposition of the Gehring-Hayman theorem. The following is the list of equivalent conditions of the Gehring-Hayman theorem.
\begin{theorem}\label{GH decomposition 1}
	Let $(X, d)$ be an $M$-roughly starlike $\delta$-Gromov hyperbolic space and $X^{\epsilon}$ be the uniformized space with $\epsilon>0$. Then the following are equivalent.
	\begin{enumerate}
	\item The Gehring-Hayman property for metric boundary points holds.
		\item The canonical boundary map $\Phi :  \partial_{G} X \to \partial_{d_{\epsilon}}X^{\epsilon}$ is  bijective and the Gehring-Hayman property for Gromov sequences holds.
		\item The Gehring-Hayman theorem holds.
		\item $X^{\epsilon}$ is a uniform space.
	\end{enumerate}
\end{theorem}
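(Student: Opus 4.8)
The plan is to establish the two-sided equivalence $(3)\Leftrightarrow(4)$ and to close the cycle $(3)\Rightarrow(1)\Rightarrow(2)\Rightarrow(3)$. The implication $(3)\Rightarrow(1)$ is immediate: the single constant $A$ of the Gehring--Hayman theorem serves as the constant $C$ in \eqref{GH for sequences} for every Gromov sequence and every metric boundary point. For $(3)\Rightarrow(4)$ I would run the argument of \cite[Proposition 4.5]{BHK}: $M$-rough starlikeness gives $\dist_{d_\epsilon}(z,\partial_{d_\epsilon}X^\epsilon)\asymp\rho_\epsilon(z)$ for $z\in X$, so every $d$-geodesic $[x,y]$ automatically satisfies the double-cone condition, and the Gehring--Hayman inequality supplies the remaining length bound; hence $[x,y]$ is a uniform curve and $X^\epsilon$ is uniform. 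The converse $(4)\Rightarrow(3)$ is carried out in Subsection~\ref{From uniformity to GH} by passing to the quasihyperbolization of $X^\epsilon$.

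For $(1)\Rightarrow(2)$ I would first recall the standard consequences of \cite{BHK}: the radial estimate $l_{d_\epsilon}([x,y])\lesssim \epsilon^{-1}e^{-\epsilon(x|y)_p}$ (whence every Gromov sequence is $d_\epsilon$-Cauchy), the comparison $\dist_{d_\epsilon}(\,\cdot\,,\partial_{d_\epsilon}X^\epsilon)\asymp\rho_\epsilon$ under rough starlikeness (whence the $d_\epsilon$-limit of a Gromov sequence lies in $\partial_{d_\epsilon}X^\epsilon$), and the continuity and surjectivity of $\Phi$ together with the fact that $\mathrm{id}_X$ extends to a continuous map $\overline\Phi\colon X\cup\partial_G X\to\overline{X^\epsilon}$ restricting to $\Phi$ on $\partial_G X$. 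Granting this, the Gehring--Hayman property for Gromov sequences is a special case of $(1)$, since each Gromov sequence $d_\epsilon$-converges to a point of $\partial_{d_\epsilon}X^\epsilon$. For injectivity of $\Phi$, suppose $\Phi(\xi)=\Phi(\eta)=:x$ with Gromov representatives $(x_n)_n$ and $(y_n)_n$; interleaving them gives a sequence $d_\epsilon$-converging to $x\in\partial_{d_\epsilon}X^\epsilon$, so $(1)$ produces $C$ with $l_{d_\epsilon}([x_n,y_n])\le C\,d_\epsilon(x_n,y_n)\to 0$; but if $(x_n|y_n)_p$ stayed bounded, hyperbolicity would force $[x_n,y_n]$ to contain a sub-arc of $d$-length $1$ inside a fixed ball $B_d(p,R)$, where $\rho_\epsilon\ge e^{-\epsilon R}$, so that $l_{d_\epsilon}([x_n,y_n])\ge e^{-\epsilon R}>0$, a contradiction. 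Hence $(x_n|y_n)_p\to\infty$, i.e.\ $\xi=\eta$.

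For $(2)\Rightarrow(3)$, the heart of the argument, I would proceed by contradiction: let $(x_k,y_k)$ be pairs with $l_{d_\epsilon}([x_k,y_k])/d_\epsilon(x_k,y_k)\to\infty$, and extract, by a compactness argument, a subsequence with $x_k\to\xi$ and $y_k\to\eta$ in $X\cup\partial_G X$. If $\xi=\eta\in\partial_G X$, the interleaved sequence is a Gromov sequence, so the Gehring--Hayman property for Gromov sequences (part of $(2)$) bounds the ratio along the subsequence; if $\xi=\eta\in X$, the local bi-Lipschitz equivalence of $d$ and $d_\epsilon$ near an interior point does the same; if $\xi\neq\eta$, then $\overline\Phi(\xi)\neq\overline\Phi(\eta)$ --- here one uses that $\Phi$ is injective (part of $(2)$) and that $X$ and $\partial_{d_\epsilon}X^\epsilon$ are disjoint in $\overline{X^\epsilon}$ --- so by continuity of $\overline\Phi$ we get $d_\epsilon(x_k,y_k)\to d_\epsilon(\overline\Phi(\xi),\overline\Phi(\eta))>0$, while the radial estimate gives $l_{d_\epsilon}([x_k,y_k])\lesssim\epsilon^{-1}e^{-\epsilon(x_k|y_k)_p}\le\epsilon^{-1}$, so the ratio is again bounded. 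Each case contradicts $l_{d_\epsilon}([x_k,y_k])/d_\epsilon(x_k,y_k)\to\infty$, and $(3)$ follows.

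I expect the main obstacle to be the compactness/limiting step in $(2)\Rightarrow(3)$ --- extracting limits in the Gromov bordification and transporting them to $\overline{X^\epsilon}$ --- together with the supporting facts that $\mathrm{id}_X$ extends continuously to the Gromov compactification and that this extension is injective once $\Phi$ is. All other ingredients (the radial length estimate, the boundary comparison $\dist_{d_\epsilon}\asymp\rho_\epsilon$, and the local comparison of $d$ and $d_\epsilon$ at interior points) are routine consequences of \cite{BHK} and rough starlikeness. Finally, I note that since $(3)\Rightarrow(1)$ is trivial and $(3)\Leftrightarrow(4)$ is essentially known, the genuinely new content of the theorem is concentrated in the implications $(1)\Rightarrow(2)$ and $(2)\Rightarrow(3)$.
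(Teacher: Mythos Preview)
Your proposal is correct and follows the same logical cycle as the paper: $(3)\Rightarrow(1)$ is trivial, $(3)\Leftrightarrow(4)$ goes through \cite[Proposition~4.5]{BHK} and the quasihyperbolization argument of Subsection~\ref{From uniformity to GH}, and the real content sits in $(1)\Rightarrow(2)\Rightarrow(3)$. For $(1)\Rightarrow(2)$ your interleaving argument for injectivity is essentially the paper's, except that where you produce the contradiction by hand (a unit sub-arc of $[x_n,y_n]$ trapped in a fixed $d$-ball), the paper simply quotes the lower estimate in \cite[Lemma~4.10]{BHK}; both work. The only genuine organizational difference is in $(2)\Rightarrow(3)$: the paper first uses the uniform bound on $l_{d_\epsilon}([x_n,y_n])$ to force $d_\epsilon(x_n,y_n)\to 0$, eliminates $d$-bounded subsequences via Proposition~\ref{local is fine.}, extracts Gromov subsequences by Arzel\`a--Ascoli (Lemma~\ref{Gromov subsequence}), and then uses injectivity of $\Phi$ through Lemma~\ref{inverse is well defined} to make the interleaved sequence Gromov; you instead pass to limits $\xi,\eta$ in the Gromov bordification $X\cup\partial_G X$ and handle the three cases $\xi=\eta\in\partial_G X$, $\xi=\eta\in X$, $\xi\neq\eta$ in one sweep via the continuous extension $\overline\Phi$. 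Your packaging is a little cleaner conceptually, but it rests on exactly the same ingredients --- the radial length estimate, the local comparison of $d$ and $d_\epsilon$ on $d$-bounded sets, injectivity of $\Phi$, and the interleaving trick --- so the two proofs are equivalent in substance. One minor remark: you list surjectivity of $\Phi$ among the ``standard consequences of \cite{BHK}''; the paper instead deduces it from~$(1)$, though your claim is also justifiable directly via the Arzel\`a--Ascoli construction you already use elsewhere.
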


%We characterize the Gehring-Hayman theorem through the localized Gehring-Hayman property and the bijectivity of the map $\Phi$, see Subsection~\ref{localized GH to GH}. This new characterization can be seen as a boundary pointwise decomposition of the Gehring-Hayman theorem. In addition, through the quasihyperbolization of a uniform space, we also show that $X^{\epsilon}$ being uniform characterizes the Gehring-Hayman theorem, see Subsection~\ref{From uniformity to GH}.

We remark that the equivalence among the first three conditions in Theorem \ref{GH decomposition 1} holds true without the roughly starlike property. An immediate consequence of Theorem \ref{GH decomposition 1} together with \cite[Proposition 4.12]{B} is the following.
\begin{corollary}
	Let $(X, d)$ be an $M$-roughly starlike $\delta$-Gromov hyperbolic space and $X^{\epsilon}$ be the uniformized space with $\epsilon>0$. Suppose that $X^{\epsilon}$ is a uniform space. Then $X^{\epsilon'}$ is a uniform space for all $0<\epsilon'\leq \epsilon$.
\end{corollary}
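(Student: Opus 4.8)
The plan is to run a short chain of implications, using Theorem~\ref{GH decomposition 1} as a dictionary between uniformity and the Gehring-Hayman theorem, and \cite[Proposition 4.12]{B} to transfer the Gehring-Hayman theorem down to smaller uniformizing parameters. First I would observe that, since $(X,d)$ is $M$-roughly starlike and $X^{\epsilon}$ is uniform by hypothesis, the implication $(4)\Rightarrow(3)$ of Theorem~\ref{GH decomposition 1} yields that the Gehring-Hayman theorem holds for $X^{\epsilon}$; that is, there is $A>0$ with $l_{d_{\epsilon}}([x,y]) \le A\, d_{\epsilon}(x,y)$ for all $x,y\in X$ and every $d$-geodesic $[x,y]$.

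Next I would fix $0<\epsilon'\le\epsilon$ and invoke \cite[Proposition 4.12]{B}: the validity of the Gehring-Hayman theorem for the parameter $\epsilon$ forces the Gehring-Hayman theorem for every smaller parameter, and hence it holds for $X^{\epsilon'}$ as well (with a constant depending on $\delta$, $M$, $\epsilon$ and $\epsilon'$). Finally, noting that the roughly starlike hypothesis is a property of $(X,d)$ alone and is unaffected by the choice of uniformizing parameter, I would apply Theorem~\ref{GH decomposition 1} a second time — now with $\epsilon'$ playing the role of $\epsilon$ — and use the implication $(3)\Rightarrow(4)$ to conclude that $X^{\epsilon'}$ is a uniform space. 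Since $\epsilon'\in(0,\epsilon]$ is arbitrary, this gives the corollary.

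I do not expect a genuine obstacle here: the argument is purely formal once Theorem~\ref{GH decomposition 1} and \cite[Proposition 4.12]{B} are available, which is precisely why the statement is billed as an immediate consequence. The only points I would take care to check are that Theorem~\ref{GH decomposition 1} applies verbatim with $\epsilon'$ in place of $\epsilon$ (it does, since its hypotheses concern $(X,d)$ together with an arbitrary positive uniformizing parameter), and that the constants produced by the two applications of Theorem~\ref{GH decomposition 1} and by \cite[Proposition 4.12]{B} depend only on $\delta$, $M$, $\epsilon$ and $\epsilon'$ — which is exactly the form in which those results supply their constants, so that no quantitative degeneration occurs as $\epsilon'$ ranges over $(0,\epsilon]$.
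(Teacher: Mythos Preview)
Your argument is correct and is exactly the one the paper indicates: apply Theorem~\ref{GH decomposition 1} ($(4)\Rightarrow(3)$), then \cite[Proposition 4.12]{B} to pass to $\epsilon'\le\epsilon$, then Theorem~\ref{GH decomposition 1} again ($(3)\Rightarrow(4)$). The only superfluous point is your closing remark about uniform control of constants as $\epsilon'$ varies; the corollary asserts uniformity of each $X^{\epsilon'}$ separately, not a single uniformity constant for all $\epsilon'\in(0,\epsilon]$.
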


Due to Theorem \ref{GH decomposition 1}, looking at boundary behavior makes it easy to check that $X^{\epsilon}$ is not a uniform space. Using the results in \cite{B} and \cite{BBS}, we determine the sharp uniformization parameter for the uniformized space to be a uniform space in the case of the hyperbolic spaces, the model spaces $\mathbb{M}^{\kappa}_n$ of the sectional curvature $\kappa<0$ with the dimension $n \geq 2$ and hyperbolic fillings, see Section~\ref{examples}. We note that in the case of the hyperbolic spaces,  Butler has already shown the same result by a different argument, see \cite[Remark 1.~11]{B}. He employed the results of the asymptotic curvature upper bound \cite[Definition~1.1 and Theorem~1.5]{BF} and the fact that the Gehring-Hayman theorem induces a visual metric on the Gromov boundary $\partial_{G}X$. Our argument is more elementary in that we only use the polar coordinate expression of the hyperbolic metric.

%Note that Butler has shown that Theorem \ref{GH theorem} holds for the unformized space of any CAT($-1$) space if $0<\epsilon \leq 1$, see \cite{}. Combining this result with Theorem~\ref{Canonical map is bijective}, we obtain the following corollaries.\cite[Proposition~4.11 and Proposition~4.12]{B}
%\begin{corollary}
	
%\end{corollary} 

 \subsection*{Acknowledgment}
The authors thank Prof. Nageswari Shanmugalingam for introducing the topic and discussions on it. Qingshan Zhou was partly supported by NNSF of China (Nos. 11901090 and 12071121), by Department of Education of Guangdong Province, China (No. 2021KTSCX116), by Guangdong Basic and Applied Basic Research Foundation (No. 2021A1515012289).

\section{Preliminaries}
We fix some notation. Let $(X, d)$ be a metric space. We say that $(X, d)$ is proper if all bounded closed sets are compact. For $x \in X$ and $r>0$, we set $B_d(x,r):=~\{y\in X\, |\, d(x,y)<r\}$. We also set $\dist_{d}(x, A):=~\inf_{y \in A}d(x, y)$ for any $A \subseteq X$ and $x \in X$. The minimum of $n$ real numbers $(a_i)_{i=1}^{n}$ is denoted by $a_1\wedge \cdots \wedge a_n$.  The \emph{length} of a curve $\gamma$ is denoted by $l_d(\gamma)$. A curve $\gamma$ is said to be \emph{rectifiable} if $l_d(\gamma) <\infty$. For $a, b\in~(-\infty, \infty)$, we say that a curve $\gamma : [a, b] \to X$ is \emph{geodesic} if $d(\gamma(a), \gamma(b))=l_d(\gamma)$. A geodesic curve from $x$ to $y$ is often denoted by $[x, y]$. A curve $\gamma : [0, \infty) \to X$ is called a \emph{geodesic ray} if $\gamma|_{[0, t]}$ is geodesic for every $t >0$. We first introduce the $M$-roughly starlike property.
\begin{definition}[$M$-Roughly starlike property]
	Let $M\geq 0$. We say that a metric space $(X, d)$ is \emph{$M$-roughly starlike} if there exists a base point $p \in X$ such that for every $x \in X$, there exists a geodesic ray $\gamma : [0, \infty) \to X$ with $\gamma(0)=p$ satisfying $\dist_{d}(x, \gamma) \leq M$.
\end{definition}

We define $\delta$-Gromov hyperbolic spaces and Gromov boundary.
\begin{definition}[Gromov product]
	Let $(X, d)$ be a metric space. The \emph{Gromov product} of two points $x, y \in X$ with respect to $p \in X$ is defined by
	\[
	 (x|y)_{p}:= \frac{1}{2}(d(p, x)+d(p, y)-d(x, y)).
	\]
\end{definition}

\begin{definition}[Gromov hyperbolic space]
Let $(X, d)$ be a metric space and $\delta \geq 0$. We say that $X$ is a \emph{$\delta$-Gromov hyperbolic space} if $X$ is unbounded, proper, geodesic metric space such that for all $x, y, z, p \in X$, 
\[
(x|z)_{p}\geq (x|y)_{p}\wedge(y|z)_{p}-\delta.
\]
\end{definition}

\begin{definition}[Gromov boundary]
	We say that two geodesic rays $\gamma$ and $\tilde{\gamma}$ in $X$ with $\gamma(0)=\tilde{\gamma}(0)=p$ are equivalent if $\sup_{t \geq 0}d(\gamma(t), \tilde{\gamma}(t))$ is finite. Then we can consider a quotient space of the set of all geodesic rays emanating from $p \in X$ by the above equivalence relation, denoted by $\partial_{G}X$.
\end{definition}
\begin{remark}\label{equivalence relation for Gromov sequences}
	Let $(X, d)$ be a $\delta$-Gromov hyperbolic space and $p \in X$ be a fixed point. There is another construction of Gromov boundary through Gromov sequences. Recall that we say that $ (x_n)_n \subseteq X$ is a Gromov sequence if $(x_n| x_m)_{p} \to \infty$ as $n, m \to \infty$. We consider the quotient space of all Gromov sequences where equivalence relation $\sim$ between two Gromov sequences $(x_n)_n$ and $(y_n)_n$ is given by $(x_n | y_n)_{p} \to \infty$ as $n \to \infty$. There is a canonical bijective map between the above two  Gromov boundaries, see \cite[Lemma~3.13]{BH}.
\end{remark}

We review a Harnack type inequality for the uniformization developed by Bonk-Heinonen-Koskela \cite{BHK}.  Let $(X, d) $ be a metric space and $p \in X$. For any $\epsilon>0$, a \emph{Harnack type inequality}
	\begin{equation}\label{Harnack}
		e^{-\epsilon d(x, y)} \leq \frac{e^{-\epsilon d(p,x)}}{e^{-\epsilon d(p,y)}} \leq e^{\epsilon d(x, y)}
	\end{equation} 
	holds for every $x, y \in X$, see \cite[Chapter 5]{BHK}. 
	
	We next recall the construction of the canonical boundary map $\Phi$.

\begin{definition}[Canonical boundary map $\Phi$]\label{Def of boundary map}
	Let $(X, d)$ be a $\delta$-Gromov hyperbolic space and $p \in X$ be a fixed point for uniformization with the parameter $\epsilon>0$. Let $\partial_{d_{\epsilon}} X^{\epsilon}:=\overline{X^{\epsilon}}\setminus X^{\epsilon}$ be the metric boundary of $X^{\epsilon}$. It is easy to see that for each geodesic ray $\gamma : [0, \infty) \to X$, $(\gamma(k))_k$ is a sequence converging to some point $x \in \partial_{d_{\epsilon}}X^{\epsilon}$ with respect to $d_{\epsilon}$. The limit point $x$ is independent of the choice the geodesic rays in the same equivalence class. Hence we define the map $\Phi : \partial_{G}X \to \partial_{d_{\epsilon}}X^{\epsilon}$ by $[\gamma] \mapsto \lim\limits_{k \to \infty}\gamma(k)$ where $[\gamma] \in \partial_{G}X$ is the equivalence class of a geodesic ray $\gamma$.
\end{definition}

We next review uniform spaces. 
\begin{definition}[$A$-uniform curve]\label{uniform curve}
	Let  $A>0$ and a metric space $(X, d)$ be given. Let $\partial_d X :=\overline{X}\setminus X$ be the metric boundary of $X$. For $a, b \in \mathbb{R}$ with $a<b$, we say that a curve $\gamma : [a, b] \to X$ is an \emph{$A$-uniform curve} if the curve $\gamma$ satisfies 
	\begin{enumerate}
		\item $l_{d}(\gamma) \leq A d(\gamma(a), \gamma(b))$,
		\item $l_{d}(\gamma|_{[a, t]})\wedge l_{d}(\gamma|_{[t, b]}) \leq A$ $\dist_{d}(\gamma(t), \partial_d X)$ \ \ \ for every $t \in [a, b]$.
	\end{enumerate}
\end{definition}

\begin{definition}($A$-uniform space)\label{uniform space}
	 A noncomplete locally compact metric space $(X, d)$ is called an \emph{$A$-uniform space} if every pair of points in $X$ can be connected by an $A$-uniform curve.
\end{definition}

\section{Proof of Theorem \ref{GH decomposition 1}}
In this section we prove Theorem \ref{GH decomposition 1}. Remark that the Gehring-Hayman theorem implies the other conditions in Theorem \ref{GH decomposition 1}. The rest of implications will be proved one by one.
\subsection{(1) $\rightarrow$ (2)}\label{equivalence of localized GH}
In this subsection we prove the Gehring-Hayman property for metric boundary points implies the Gehring-Hayman property for Gromov sequences and the bijectivity of the canonical boundary map $\Phi$. 
\begin{lemma}
	Let $(X, d)$ be a $\delta$-Gromov-hyperbolic space and $X^{\epsilon}$ be the uniformized space. If the Gehring-Hayman property for metric boundary points holds, then the Gehring-Hayman property for Gromov sequences holds.
\end{lemma}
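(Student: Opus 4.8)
The plan is to reduce the lemma to a verbatim application of the hypothesis. I would show that every Gromov sequence $(x_n)_n$ is a Cauchy sequence in $(X, d_\epsilon)$ and that its limit lies in $\partial_{d_\epsilon} X^\epsilon$. Granting this, write $x := \lim_n x_n \in \overline{X^\epsilon}$; then $x \in \partial_{d_\epsilon} X^\epsilon$ and $d_\epsilon(x_n, x) \to 0$, so the Gehring-Hayman property for metric boundary points applies to the pair $\bigl((x_n)_n, x\bigr)$ and yields some $C$ with $l_{d_\epsilon}([x_n, x_m]) \le C\, d_\epsilon(x_n, x_m)$ for all $n, m$. After replacing $C$ by $\max\{C, 1\}$ this is precisely \eqref{GH for sequences}, i.e.\ the Gehring-Hayman property for Gromov sequences.

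The quantitative heart of the matter is the standard uniformization length bound
\[
d_\epsilon(x_n, x_m) \;\le\; l_{d_\epsilon}\bigl([x_n, x_m]\bigr) \;\le\; \frac{C_0(\delta)}{\epsilon}\, e^{-\epsilon (x_n|x_m)_p}
\qquad\text{for all } n, m ,
\]
where $C_0(\delta)$ depends only on $\delta$. To obtain it I would fix a $d$-geodesic $[x_n, x_m]$, take a point $w$ on it nearest to $p$, and split $[x_n, x_m]$ into the two subarcs $[x_n, w]$ and $[w, x_m]$. On the subarc from $w$ to $x_n$, parametrized by arclength $t$ measured from $w$, $\delta$-hyperbolicity gives $d(p, \gamma(t)) \ge (x_n|x_m)_p + t - c\delta$ for a universal $c$ (the subarc recedes from $p$ essentially monotonically because $w$ is the nearest point, and $d(p, z) \ge (x_n|x_m)_p$ for every $z \in [x_n, x_m]$). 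Hence $\int_{[w, x_n]} \rho_\epsilon\, ds \le e^{c\epsilon\delta}\, e^{-\epsilon(x_n|x_m)_p}\int_0^\infty e^{-\epsilon t}\, dt = \epsilon^{-1} e^{c\epsilon\delta} e^{-\epsilon(x_n|x_m)_p}$, and the same bound holds for $[w, x_m]$; summing the two halves gives the display. Since $(x_n|x_m)_p \to \infty$ as $n, m \to \infty$ by definition of a Gromov sequence, the bound forces $d_\epsilon(x_n, x_m) \to 0$, so $(x_n)_n$ is $d_\epsilon$-Cauchy and converges to some $x \in \overline{X^\epsilon}$.

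It remains to rule out $x \in X^\epsilon$. The Harnack inequality \eqref{Harnack} shows that $\rho_\epsilon$ is locally bounded between positive constants, so the identity map $(X, d) \to (X, d_\epsilon)$ is a homeomorphism; thus if $x$ were a point of $X$ we would have $x_n \to x$ with respect to $d$ as well, whence $(x_n | x_n)_p = d(p, x_n) \to d(p, x) < \infty$, contradicting $(x_n | x_m)_p \to \infty$ (take $m = n$). Therefore $x \in \partial_{d_\epsilon} X^\epsilon$, and the reduction of the first paragraph completes the proof.

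The proof is short, and I expect the only step needing genuine care to be the length bound of the second paragraph: one must keep the constant $C_0(\delta)$ independent of $n, m$ and of the sequence, and check that the $O(\delta)$ slack in the estimates for $d(p, \cdot)$ along $[x_n, x_m]$ costs at most a multiplicative factor $e^{O(\epsilon\delta)}$. The remaining steps --- Cauchyness, excluding an interior limit via \eqref{Harnack}, and invoking the hypothesis --- are routine.
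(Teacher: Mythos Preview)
Your proposal is correct and follows essentially the same route as the paper: show that a Gromov sequence converges in $d_\epsilon$ to a boundary point and then invoke the hypothesis. The paper accomplishes the first step in one line by citing the first inequality in the proof of \cite[Lemma~4.10]{BHK}, which is exactly the length bound $l_{d_\epsilon}([x_n,x_m])\le C_0(\delta)\epsilon^{-1}e^{-\epsilon(x_n|x_m)_p}$ you derive directly; your additional argument ruling out an interior limit via \eqref{Harnack} just makes explicit what is implicit in that citation.
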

\begin{proof}
	 Let $(x_n)_n$ be an arbitrary Gromov sequence. Then by the first inequality of \cite[Proof of Lemma~4.10]{BHK}, there exists $x \in \partial_{d_{\epsilon}}X^{\epsilon}$ such that $x_n \to x$ as $n \to \infty$ with respect to $d_{\epsilon}$. The conclusion follows by the Gehring-Hayman theorem for metric boundary points.
\end{proof}

\begin{lemma}
	Let $(X, d)$ be a $\delta$-Gromov-hyperbolic space and $X^{\epsilon}$ be the uniformized space with the distinguished point $p\in X$. Suppose that the Gehring-Hayman property for metric boundary points holds. Then the canonical boundary map $\Phi : \partial_{G}X \to \partial_{d_{\epsilon}}X^{\epsilon} $ is bijective. 
\end{lemma}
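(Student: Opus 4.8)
The goal is to show $\Phi : \partial_G X \to \partial_{d_\epsilon} X^\epsilon$ is both injective and surjective. Let me think about each direction.

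First, surjectivity. Take a point $x \in \partial_{d_\epsilon} X^\epsilon$. There is a sequence $(y_n)$ in $X$ with $d_\epsilon(y_n, x) \to 0$. I'd want to show that (a suitable subsequence of) $(y_n)$ is comparable to the sequence $(\gamma(k))_k$ for some geodesic ray $\gamma$, hence $x$ is in the image of $\Phi$. The key obstruction here is that $(y_n)$ need not be a Gromov sequence a priori. But: $d_\epsilon(y_n,x)\to 0$ means in particular that $d_\epsilon(y_n, p) \to d_\epsilon(x,p) > 0$ while $d_\epsilon(y_n, y_m)\to 0$; since $\rho_\epsilon = e^{-\epsilon d(p,\cdot)}$ and, say by the estimate $d_\epsilon(z,p) \asymp \frac{1}{\epsilon} e^{-\epsilon d(p,z)}$ (the standard BHK computation — e.g.\ the first inequality of [BHK, Proof of Lemma 4.10]), boundedness of $d_\epsilon(y_n,p)$ away from $0$ gives $\sup_n d(p,y_n) < \infty$ only if $x$ is an interior point; since $x$ is a boundary point we must have $d(p,y_n)\to\infty$. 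Combined with the length estimate along geodesics — the length of $[y_n,y_m]$ in the $d_\epsilon$ metric is small (this is exactly where the Gehring–Hayman property for metric boundary points enters: $l_{d_\epsilon}([y_n,y_m]) \le C\, d_\epsilon(y_n,y_m) \to 0$) — one forces the geodesics $[p,y_n]$ to fellow-travel for longer and longer initial segments, i.e.\ $(y_n|y_m)_p \to \infty$. So $(y_n)$ is a Gromov sequence and, by properness and Arzelà–Ascoli applied to $[p,y_n]$, it determines an equivalence class $[\gamma] \in \partial_G X$ with $\Phi([\gamma]) = \lim_k \gamma(k)$; a final fellow-traveling argument shows $\lim_k \gamma(k) = x$.

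Second, injectivity. Suppose $\gamma_1, \gamma_2$ are geodesic rays from $p$ with $\Phi([\gamma_1]) = \Phi([\gamma_2]) =: x \in \partial_{d_\epsilon}X^\epsilon$. Then both $(\gamma_1(k))_k$ and $(\gamma_2(k))_k$ converge to $x$ in $d_\epsilon$, so the interlaced sequence $z_1 = \gamma_1(1), z_2 = \gamma_2(1), z_3 = \gamma_1(2), z_4 = \gamma_2(2), \dots$ also converges to $x$. Applying the Gehring–Hayman property for metric boundary points to this interlaced sequence, the geodesic $[\gamma_1(k), \gamma_2(k)]$ has $d_\epsilon$-length at most $C\, d_\epsilon(\gamma_1(k), \gamma_2(k)) \to 0$. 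Translating this back to the $d$-metric via the Harnack inequality \eqref{Harnack} and the fact that $d_\epsilon$-lengths control $d$-distances through $\rho_\epsilon$, I get that $d(\gamma_1(k), \gamma_2(k))$ stays bounded as $k \to \infty$: indeed, along $[\gamma_1(k),\gamma_2(k)]$ every point $z$ has $d(p,z) \ge k - O(1)$ (its $d_\epsilon$-distance to $x$ being small forces $d(p,z)$ large), so $\rho_\epsilon \ge e^{-\epsilon(k+O(1))}$ on that geodesic, whence $l_d([\gamma_1(k),\gamma_2(k)]) \le e^{\epsilon(k+O(1))} l_{d_\epsilon}([\gamma_1(k),\gamma_2(k)])$, and one checks the right-hand side stays bounded. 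Thus $\sup_k d(\gamma_1(k),\gamma_2(k)) < \infty$, and by $\delta$-hyperbolicity (geodesics with bounded endpoint distance along a common starting point stay uniformly close) $\sup_{t\ge 0} d(\gamma_1(t),\gamma_2(t)) < \infty$, i.e.\ $[\gamma_1] = [\gamma_2]$.

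The main obstacle in both directions is the same: converting the $d_\epsilon$-smallness of distances between far-out points into quantitative geometric control (Gromov-sequence property, bounded fellow-traveling) in the original metric $d$. The tool for this is the bookkeeping between $\rho_\epsilon$-weighted lengths and $d$-lengths — essentially the BHK estimate $d_\epsilon(z,w) \asymp_\epsilon \big(e^{-\epsilon d(p,z)} + e^{-\epsilon d(p,w)}\big) \wedge$ (weighted geodesic length) — so I would isolate that estimate as a preliminary computation (or cite the relevant lines of [BHK, Proof of Lemma 4.10] and the Harnack inequality \eqref{Harnack}) and then the hyperbolicity inputs (stability of geodesics, Arzelà–Ascoli via properness) are standard. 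Everything else is routine once the Gehring–Hayman property for metric boundary points is invoked to make the relevant $d_\epsilon$-lengths small.
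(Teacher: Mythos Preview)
Your overall strategy matches the paper's: for surjectivity you show that a $d_\epsilon$-convergent sequence is Gromov (via the Gehring--Hayman property together with the length--Gromov-product comparison from \cite[Proof of Lemma~4.10]{BHK}) and then invoke Arzel\`a--Ascoli; for injectivity you interlace the two ray sequences and apply the Gehring--Hayman property to the resulting sequence. Up to this point the two arguments are essentially the same.

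The divergence, and the one genuine error, is in how you finish injectivity. From $l_{d_\epsilon}([\gamma_1(k),\gamma_2(k)])\to 0$ the paper simply cites the second inequality in \cite[Proof of Lemma~4.10]{BHK} to conclude $(z_n|z_m)_p\to\infty$, which immediately gives $(\gamma_1(n))_n\sim(\gamma_2(n))_n$ as Gromov sequences. You instead try to bound $d(\gamma_1(k),\gamma_2(k))$ directly via Harnack, and here the inequality points the wrong way: from $d(p,z)\ge k-O(1)$ one gets $\rho_\epsilon(z)=e^{-\epsilon d(p,z)}\le e^{-\epsilon(k-O(1))}$, not $\ge$, so your bound $l_d\le e^{\epsilon(k+O(1))}l_{d_\epsilon}$ does not follow from the premise you state. (Your justification ``$d_\epsilon(z,x)$ small forces $d(p,z)$ large'' is in any case qualitative and does not yield the quantitative $k-O(1)$ without already invoking the \cite{BHK} estimate you are trying to avoid.)

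The fix is easy and in fact makes your direct route work: what you actually need is the \emph{upper} bound $d(p,z)\le k+\delta$ for every $z\in[\gamma_1(k),\gamma_2(k)]$, and this is immediate from $\delta$-thinness of the triangle $p,\gamma_1(k),\gamma_2(k)$ (each such $z$ lies within $\delta$ of $[p,\gamma_1(k)]\cup[p,\gamma_2(k)]$). Then $\rho_\epsilon\ge e^{-\epsilon(k+\delta)}$ along the geodesic, whence $d(\gamma_1(k),\gamma_2(k))=l_d([\gamma_1(k),\gamma_2(k)])\le e^{\epsilon(k+\delta)}l_{d_\epsilon}([\gamma_1(k),\gamma_2(k)])\le e^{\epsilon(k+\delta)}\cdot C\cdot\tfrac{2}{\epsilon}e^{-\epsilon k}$, which is bounded. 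So your conclusion is correct once the direction is repaired; the paper's route via the Gromov product is simply a cleaner packaging of the same computation.
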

\begin{proof}
Take $x \in \partial_{d_{\epsilon}} X^{\epsilon}$ and geodesic rays $\gamma$ and $\tilde{\gamma}$ such that 
\[
\gamma(0)=\tilde{\gamma}(0)=p \ \text{and} \ \lim\limits_{n \to \infty}\gamma(n)=\lim\limits_{n \to \infty}\tilde{\gamma}(n)=x
\] 
with respect to $d_{\epsilon}$. The sequence $(z_n)_n:=(\gamma(1), \tilde{\gamma}(1), \gamma(2), \tilde{\gamma}(2), \cdots)$ is a  sequence converging to $x \in \partial_{d_{\epsilon}}X^{\epsilon}$. Thus by the Gehring-Hayman theorem for metric boundary points and  the second inequality of \cite[Proof of Lemma~4.10]{BHK}, we have $(z_n|z_m)_p \to \infty$ as $n, m \to \infty$, which implies $(\gamma(n))_n \sim (\tilde{\gamma}(n))_n$ as Gromov sequences. Hence the geodesic rays $\gamma$ and $\tilde{\gamma}$ are in the same equivalence class. Therefore, $\Phi$ is injective.

The surjectivity of $\Phi$ directly follows from the Gehring-Hayman property for metric boundary points and \cite[Lemma~4.10]{BHK}.

%We next prove that $\Phi$ is surjective. Let $x \in \partial_{d_{\epsilon}} X^{\epsilon}$. Take a sequence $(x_n)_n \subseteq X$ such that $d(p, x_n) \nearrow \infty$ and $d_{\epsilon}(x_n, x) \to 0$ as $n \to 0$. Let $\gamma_n$ be a geodesic curve from $p$ to $x_n$. By an Arzela-Ascoli type argument, there exist a subsequence $(\gamma_{n_k})_k$ and a geodesic ray $\gamma$ such that $\gamma_{n_k}$ locally uniformly converges to $\gamma$. Note that there exists a limit $\gamma(\infty):=\lim\limits_{k \to \infty} \gamma(k) \in \partial_{d_{\epsilon}}X^{\epsilon}$ with respect to $d_{\epsilon}$. Then it is not hard to show that $\gamma(\infty)=x$ due to the fact that $d_{\epsilon}(x_{n_k}, x) \to 0$ and $\gamma_{n_k} \to \gamma$ locally uniformly as $k \to \infty$. This completes the proof.

\end{proof}

\subsection{(2) $\rightarrow$ (3)}\label{localized GH to GH}
In this subsection we will prove that the second condition in Theorem \ref{GH decomposition 1} implies the Gehring-Hayman theorem. We first give a remark on Gromov sequences.

\begin{remark}\label{Gromov sequence remark}
	Let $(X, d)$ be a $\delta$-Gromov-hyperbolic space. For given $(x_n)_n$, $(y_n)_n \subseteq X$, suppose that $(y_n)_n$ is a Gromov sequence and $(x_n | y_n)_p \to \infty$ as $n \to \infty$. Then $(x_n)_n$ is a Gromov sequence which is equivalent to $ (y_n)_n$. In fact, since $X$ is a $\delta$-Gromov hyperbolic space,
	\begin{align}
		(x_n|x_m)_p &\geq (x_n | y_n)_p \wedge (y_n|x_m)_p-\delta \notag \\
		&\geq (x_n | y_n)_p \wedge \Big((y_n|y_m)_p\wedge (y_m|x_m)_p-\delta\Big)-\delta \notag \\
		& \to  \infty \ \ \ (n, m \to \infty),
	\end{align} 
	which tells us that $(x_n)_n$ is a Gromov sequence.
	\end{remark}
	
Next we derive some properties from the map $\Phi : \partial_{G}X \to \partial_{d_{\epsilon}} X^{\epsilon}$. Note that the only one thing we know is that $\Phi$ is a bijective map. For each $x \in \partial_{d_{\epsilon}} X^{\epsilon}$, take $(x_n)_n$ and $(y_n)_n$ such that $x_n \to x$ and $y_n \to x$ as $n \to \infty$ with respect to $d_{\epsilon}$. We then show that
\begin{itemize}
	\item We can extract Gromov sequences from $(x_n)_n$ and $(y_n)_n$.
	\item The extracted Gromov sequences are equivalent to each other.
\end{itemize} 
 The second property derived from the bijectivity of $\Phi$ plays an important role to prove Proposition~\ref{local to global}.

\begin{lemma}\label{Gromov subsequence}
	 Let  $x \in \partial_{d_{\epsilon}}X^{\epsilon}$. For each sequence $(x_n)_n \subseteq X$ converging to $x$ with respect to $d_{\epsilon}$, there always exists a Gromov subsequence $(x_{n_k})_k$.
\end{lemma}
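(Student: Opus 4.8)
The plan is to exploit the relationship between the metric $d_\epsilon$ and the Gromov product, via the uniformization estimates from \cite{BHK}. The key point is that convergence in $d_\epsilon$ forces the $d$-distance to the base point $p$ to blow up: if $x \in \partial_{d_\epsilon} X^\epsilon$ and $x_n \to x$ in $d_\epsilon$, then $(x_n)$ cannot stay in a bounded neighborhood of $p$, since on any ball $B_d(p,R)$ the density $\rho_\epsilon$ is bounded below by $e^{-\epsilon R}$, so $d_\epsilon$ restricted there is biLipschitz to a metric in which $X$ is proper (hence complete), and $x$ would lie in $X^\epsilon$, contradiction. Thus $d(p, x_n) \to \infty$.

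Next I would pass to a subsequence along which $d(p, x_n)$ is increasing fast, and show such a subsequence is Gromov. Concretely, recall (this is part of \cite[Proof of Lemma~4.10]{BHK}, already invoked above) the estimate relating $d_\epsilon(x_n, x_m)$ to the Gromov product: up to multiplicative constants depending only on $\epsilon, \delta$,
\[
 d_\epsilon(x_n, x_m) \asymp e^{-\epsilon (x_n | x_m)_p} - e^{-\epsilon d(p, x_n)} - e^{-\epsilon d(p, x_m)},
\]
or more usefully the one-sided bound $d_\epsilon(x_n,x_m) \gtrsim e^{-\epsilon(x_n|x_m)_p} \wedge \big(\text{something}\big)$; in any case the relevant content is that $d_\epsilon(x_n, x_m) \to 0$ together with $d(p,x_n), d(p,x_m) \to \infty$ implies $e^{-\epsilon(x_n|x_m)_p} \to 0$, i.e. $(x_n|x_m)_p \to \infty$. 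Since $(x_n)$ already converges in $d_\epsilon$, it is $d_\epsilon$-Cauchy, so $d_\epsilon(x_n, x_m) \to 0$ as $n, m \to \infty$; combined with $d(p, x_n) \to \infty$ this shows the whole sequence $(x_n)$ — not merely a subsequence — is already Gromov. Extracting a subsequence is then trivially possible, so the lemma follows a fortiori; but if one only wants a subsequence, it suffices to first pass to a subsequence with $d(p, x_{n_k}) \to \infty$ (always possible unless $(x_n)$ is bounded in $d$, which the first step rules out for $x \in \partial_{d_\epsilon}X^\epsilon$) and then apply the Gromov-product estimate.

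The main obstacle is bookkeeping the precise form of the inequality from \cite[Proof of Lemma~4.10]{BHK}: one must be careful that the lower bound for $d_\epsilon(x_n,x_m)$ in terms of $e^{-\epsilon(x_n|x_m)_p}$ does not degenerate when $d(p,x_n)$ is much smaller than $(x_n|x_m)_p$ — but this cannot happen since $(x_n|x_m)_p \le d(p,x_n) \wedge d(p,x_m)$ always. So once $d(p,x_n) \to \infty$ is established, the Gromov-product lower bound on $d_\epsilon$ is genuinely controlling, and the conclusion is immediate. I would therefore structure the proof as: (i) show $d(p, x_n) \to \infty$ using properness of $X$ and the local comparability of $d_\epsilon$ with a complete metric near $p$; (ii) quote the lower bound for $d_\epsilon$ in terms of the Gromov product; (iii) combine with the $d_\epsilon$-Cauchy property to get $(x_n|x_m)_p \to \infty$, and extract (or simply take) the desired subsequence.
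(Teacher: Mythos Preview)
Your argument has a genuine gap at step (ii). The inequalities in \cite[Proof of Lemma~4.10]{BHK} control the \emph{length} $l_{d_\epsilon}([x_n,x_m])$ of a $d$-geodesic, not the distance $d_\epsilon(x_n,x_m)$ itself; passing from the former to the latter is exactly the Gehring--Hayman inequality, which in \cite{BHK} is only available for $\epsilon\le\epsilon_0(\delta)$. Since the present lemma is being used to \emph{prove} the implication $(2)\Rightarrow(3)$ of Theorem~\ref{GH decomposition 1} (i.e., to establish Gehring--Hayman for an arbitrary $\epsilon>0$), you cannot invoke that lower bound here without circularity. Indeed the lower bound $d_\epsilon(x_n,x_m)\gtrsim e^{-\epsilon(x_n|x_m)_p}$ is simply false for large $\epsilon$: in $(\mathbb{B}^2,d_{\mathbb{B}^2})$ with $\epsilon>1$, take two distinct geodesic rays $\gamma,\tilde\gamma$ from the origin and set $(x_n)=(\gamma(1),\tilde\gamma(1),\gamma(2),\tilde\gamma(2),\dots)$. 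Then $d_\epsilon(x_n,x_m)\to 0$ by the computation in Section~\ref{examples}, while $(\gamma(k)|\tilde\gamma(k))_p$ stays bounded, so $(x_n)$ is $d_\epsilon$-Cauchy but \emph{not} a Gromov sequence. This also refutes your stronger claim that the whole sequence must already be Gromov.

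The paper circumvents this by a purely $d$-geometric argument valid for every $\epsilon>0$: after passing to a subsequence with $d(p,x_n)\ge n$ (your step (i), which is fine), it applies Arzel\`a--Ascoli to the $d$-geodesics $[p,x_n]$ to extract a limit geodesic ray $\gamma$, and then computes Gromov products directly to show $(x_{n_k})_k\sim(\gamma(k))_k$. No comparison between $d_\epsilon$ and the Gromov product is needed, only properness of $(X,d)$ and elementary triangle-inequality estimates.
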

\begin{proof}
	By taking a subsequence if needed, we may assume that $d(p, x_n)\geq n$ for each $n \in \mathbb{N}$. Take a geodesic curve $\gamma_n$ from $p$ to $x_n$. Applying Arzela-Ascoli theorem and doing a diagonal argument, there exist a subsequence $(\gamma_{n_k})_k$ and a geodesic ray $\gamma$ such that 
	\begin{equation}\label{estimate of Gromov subsequence}
		\lim\limits_{k \to \infty}\sup_{t \in [0, m]}d(\gamma_{n_k}(t), \gamma(t))=0  \ \ \text{and} \ \ \sup_{t \in [0, m]}d(\gamma_{n_m}(t), \gamma(t))\leq 1.
	\end{equation}
	for each $m \in \mathbb{N}$.
 Set $y_m=\gamma_{n_m}(m)$. Note that
 \begin{align}\label{Gromov seq 1}
 	(y_m|x_{n_m})_p &= \frac{1}{2}\Big(d(p, y_m)+d(p, x_{n_m})-d(y_m, x_{n_m})\Big)\notag \\
 	&=\frac{1}{2}\Big(m+d(p, x_{n_m})-(d(p, x_{n_m})-m)\Big)\notag \\
 	&\geq m \to \infty \ \ (m \to \infty).
 \end{align}
 Also, we have 
 \begin{align}\label{Gromov seq 2}
 	(y_m|\gamma(m))_p &=\frac{1}{2}\Big(d(p, y_m)+d(p, \gamma(m))-d(y_m, \gamma(m))\Big) \notag \\
 	&\geq \frac{1}{2}(m+m-1) \to \infty \ \ (m \to \infty).
 \end{align}
 where we used \eqref{estimate of Gromov subsequence} to obtain the last inequality. Therefore, combining Remark \ref{Gromov sequence remark}, \eqref{Gromov seq 1} and \eqref{Gromov seq 2}, we conclude that $(x_{n_m})_m$ is a Gromov sequence. This completes the proof.
 \end{proof}

\begin{lemma}\label{inverse is well defined}
	 Let $x \in \partial_{d_{\epsilon}} X^{\epsilon}$. Suppose that $\Phi :  \partial_{G} X \to \partial_{d_{\epsilon}}X^{\epsilon}$ is  bijective. If there are sequences $(x_n)_n$ and $(y_n)_n$ that converge to $x$ with respect to $d_{\epsilon}$, then any Gromov subsequences $ (x_{n_k})_k$ and $(y_{n_k})_k$ are equivalent to each other.
\end{lemma}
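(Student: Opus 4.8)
The plan is to push both Gromov subsequences, viewed as points of $\partial_{G} X$, to the single point $x \in \partial_{d_{\epsilon}} X^{\epsilon}$ under $\Phi$, and then to use the injectivity of $\Phi$. First I would invoke Remark \ref{equivalence relation for Gromov sequences} (or, if one prefers a self-contained route, the geodesic ray produced inside the proof of Lemma \ref{Gromov subsequence}, noting that a Gromov sequence is equivalent to every one of its subsequences, so passing to a further subsequence costs nothing) to fix geodesic rays $\gamma$ and $\tilde{\gamma}$ emanating from $p$ with $(\gamma(k))_k \sim (x_{n_k})_k$ and $(\tilde{\gamma}(k))_k \sim (y_{n_k})_k$ as Gromov sequences. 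Note that each $(\gamma(k))_k$ is itself a Gromov sequence, since $(\gamma(k)|\gamma(l))_p = \min(k,l)$.

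The heart of the argument is the claim that $\gamma(k) \to x$ and $\tilde{\gamma}(k) \to x$ with respect to $d_{\epsilon}$. To prove it for $\gamma$, I would \emph{interlace} the two sequences into $(\gamma(1), x_{n_1}, \gamma(2), x_{n_2}, \dots)$. Because $(\gamma(k))_k$ and $(x_{n_k})_k$ are mutually equivalent Gromov sequences, the same computation with the $\delta$-inequality used in Remark \ref{Gromov sequence remark} shows the interlaced sequence is again a Gromov sequence, and hence, by the first inequality of \cite[Proof of Lemma~4.10]{BHK}, it converges to some point of $\partial_{d_{\epsilon}} X^{\epsilon}$ with respect to $d_{\epsilon}$. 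Its subsequence $(x_{n_k})_k$ converges to $x$, while its subsequence $(\gamma(k))_k$ converges to $\Phi([\gamma])$ by the construction of $\Phi$ in Definition \ref{Def of boundary map}; since the whole interlaced sequence has a limit, these two limits coincide and $\Phi([\gamma]) = x$. The identical reasoning gives $\Phi([\tilde{\gamma}]) = x$.

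Finally, injectivity of $\Phi$ forces $[\gamma] = [\tilde{\gamma}]$ in $\partial_{G} X$, i.e. $(\gamma(k))_k \sim (\tilde{\gamma}(k))_k$; combining this with $(x_{n_k})_k \sim (\gamma(k))_k$, $(y_{n_k})_k \sim (\tilde{\gamma}(k))_k$, and transitivity of the equivalence relation (again via the $\delta$-inequality as in Remark \ref{Gromov sequence remark}) yields $(x_{n_k})_k \sim (y_{n_k})_k$, which is the assertion. The main obstacle is exactly the claim of the middle paragraph: it is the statement that $\Phi$ sends the $\partial_{G} X$-class of a $d_{\epsilon}$-convergent Gromov sequence to its $d_{\epsilon}$-limit, and the only ingredient that makes this go through without the Gehring--Hayman property or rough starlikeness is the first inequality of \cite[Proof of Lemma~4.10]{BHK}, applied to the interlaced sequence; everything else is bookkeeping with $\delta$-hyperbolicity and the definition of $\Phi$.
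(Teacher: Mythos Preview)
Your proof is correct and follows the same overall strategy as the paper: associate geodesic rays $\gamma,\tilde\gamma$ to the two Gromov subsequences, show that both rays are sent to $x$ by $\Phi$, invoke injectivity of $\Phi$, and conclude by transitivity of $\sim$.

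The one genuine difference lies in how you establish $\Phi([\gamma])=x$. The paper applies the \emph{construction} inside the proof of Lemma~\ref{Gromov subsequence} directly to $(x_n)_n$: the Arzel\`a--Ascoli limit ray $\gamma$ comes packaged with the information $\lim_k\gamma(k)=x$ in $d_\epsilon$ (implicitly, because $\gamma$ is locally uniformly close to geodesics $[p,x_{n_k}]$ whose tails have vanishing $d_\epsilon$-length), and then a short bootstrap at the end upgrades ``some Gromov subsequences'' to ``any Gromov subsequences'' via the fact that a Gromov sequence is equivalent to each of its subsequences. You instead start from an arbitrary Gromov subsequence, take any representing ray, and recover $\gamma(k)\to x$ by your interlacing trick together with the first inequality of \cite[Proof of Lemma~4.10]{BHK}. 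This is a clean substitute: it makes explicit that the only analytic input needed is ``Gromov sequence $\Rightarrow$ $d_\epsilon$-Cauchy'', it handles arbitrary Gromov subsequences in one pass without the final bootstrap, and it does not rely on re-reading the Arzel\`a--Ascoli construction to extract the $d_\epsilon$-limit. The paper's route, on the other hand, gets the ray and its $d_\epsilon$-limit simultaneously from a single construction already in hand.
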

\begin{proof}
	Fix $x \in \partial_{d_{\epsilon}}X^{\epsilon}$. Let $(x_n)_n$ and $(y_n)_n$ be sequences converging to $x \in \partial_{d_{\epsilon}}X^{\epsilon}$. We will first prove that there exist Gromov subsequences $(x_{n_k})_k$ and $(y_{n_k})_k$ that are equivalent to each other as Gromov sequences. Applying the proof of Lemma \ref{Gromov subsequence} to $(x_n)_n$ and $(y_n)_n$, there exist Gromov subsequences $(x_{n_k})_k$ and $(y_{n_k})_k$ and geodesic rays $\gamma$ and $\tilde{\gamma}$ such that
	\begin{equation}
		(x_{n_k})_k \sim (\gamma(k))_k, \ \ (y_{n_k})_k \sim (\tilde{\gamma}(k))_k,
	\end{equation}
	as Gromov sequences and 
	\begin{equation}
		\lim\limits_{k \to \infty}\gamma(k)=\lim\limits_{k \to \infty}\tilde{\gamma}(k)=x
	\end{equation}
	with respect to $d_{\epsilon}$. Since $\Phi :  \partial_{G} X \to \partial_{d_{\epsilon}}X^{\epsilon}$ is injective, $\gamma$ and $\tilde{\gamma}$ are equivalent to each other, i.e., $(\gamma(k))_k \sim (\tilde{\gamma}(k))_k$ as Gromov sequences. Hence we conclude that $(x_{n_k})_k \sim (y_{n_k})_k$. By the above argument and noting that every Gromov sequence $(x_n)_n$ is equivalent to its subsequence $(x_{n_k})$ (\cite[Lemma~5.3]{Jussi}), we can prove that any Gromov subsequences $ (x_{n_k})_k$ and $(y_{n_k})_k$ are equivalent to each other.
\end{proof}

The following proposition tells us that the Gehring-Hayman property always holds for any $\epsilon>0$ if the distance between points is bounded above by a uniform constant. This will be used to prove Proposition \ref{local to global}. 

\begin{proposition}\label{local is fine.}
	Let $(X, d)$ be a geodesic metric space. Let $\epsilon>0$ and  $M >0$. Then there exists $C:=C(\epsilon, M) \geq 1$ such that for every pair of points $x, y \in X$ with $d(x, y) \leq M$,
	\[
	l_{d_{\epsilon}}([x, y]) \leq C d_{\epsilon}(x, y)
	\]
	holds.
\end{proposition}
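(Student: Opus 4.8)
The plan is to rewrite both sides of the asserted inequality as line integrals of the conformal density $\rho_\epsilon=e^{-\epsilon d(p,\cdot)}$ against $d$-arclength, and compare them using only the Harnack inequality \eqref{Harnack}. Recall the standard fact that the length of a $d$-rectifiable curve $\gamma$ in the length metric $d_\epsilon$ equals $l_{d_\epsilon}(\gamma)=\int_\gamma\rho_\epsilon\,ds$, since $\rho_\epsilon$ is continuous (indeed $d(p,\cdot)$ is $1$-Lipschitz); see \cite{BHK}. We may assume $x\neq y$ and set $r:=d(x,y)\leq M$, so the $d$-geodesic $[x,y]$ has $d$-length $r$ and is rectifiable. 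Throughout, \eqref{Harnack} is used in the form $\rho_\epsilon(x)e^{-\epsilon d(x,w)}\leq\rho_\epsilon(w)\leq\rho_\epsilon(x)e^{\epsilon d(x,w)}$ for all $w\in X$.

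First I would bound the left-hand side. Parametrizing $[x,y]$ by $d$-arclength on $[0,r]$ gives $d(x,[x,y](t))=t$, so \eqref{Harnack} yields $\rho_\epsilon([x,y](t))\leq e^{\epsilon t}\rho_\epsilon(x)$, hence
\[
l_{d_\epsilon}([x,y])=\int_0^r\rho_\epsilon([x,y](t))\,dt\leq\rho_\epsilon(x)\int_0^r e^{\epsilon t}\,dt=\rho_\epsilon(x)\,\frac{e^{\epsilon r}-1}{\epsilon}.
\]

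Next I would bound $d_\epsilon(x,y)$ from below. Let $\gamma$ be any $d$-rectifiable curve from $x$ to $y$. Since $l_d(\gamma)\geq d(x,y)=r$, it has an initial subarc $\gamma'$ with $l_d(\gamma')=r$, and every point $w$ on $\gamma'$ satisfies $d(x,w)\leq l_d(\gamma')=r$, so $\rho_\epsilon(w)\geq e^{-\epsilon r}\rho_\epsilon(x)$ by \eqref{Harnack}. Therefore
\[
\int_\gamma\rho_\epsilon\,ds\geq\int_{\gamma'}\rho_\epsilon\,ds\geq l_d(\gamma')\,e^{-\epsilon r}\rho_\epsilon(x)=r\,e^{-\epsilon r}\rho_\epsilon(x).
\]
Taking the infimum over such $\gamma$ and invoking the definition \eqref{uniformization metric} gives $d_\epsilon(x,y)\geq r\,e^{-\epsilon r}\rho_\epsilon(x)$.

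Finally, combining the two displays with the elementary bound $e^{\epsilon r}-1=\int_0^r\epsilon e^{\epsilon t}\,dt\leq\epsilon r\,e^{\epsilon r}$, I obtain
\[
l_{d_\epsilon}([x,y])\leq\rho_\epsilon(x)\,r\,e^{\epsilon r}=e^{2\epsilon r}\bigl(r\,e^{-\epsilon r}\rho_\epsilon(x)\bigr)\leq e^{2\epsilon M}\,d_\epsilon(x,y),
\]
using $r\leq M$, so the claim holds with $C:=e^{2\epsilon M}\geq1$. The only mildly delicate points are the identification $l_{d_\epsilon}(\gamma)=\int_\gamma\rho_\epsilon\,ds$ and the truncation producing the lower bound for $d_\epsilon$; note that neither Gromov hyperbolicity nor rough starlikeness enters, only that $X$ is geodesic, which is exactly why the statement is phrased for general geodesic spaces.
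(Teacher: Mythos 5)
Your proof is correct and takes essentially the same route as the paper's: both compare the $\rho_\epsilon$-weighted length of the geodesic and of an arbitrary competing curve to $\rho_\epsilon(x)\,d(x,y)$ using the Harnack-type inequality \eqref{Harnack}, arriving at the same constant $e^{2\epsilon M}$. The only difference is bookkeeping: you truncate the competitor to an initial subarc of $d$-length $d(x,y)$ and center the Harnack estimate at $x$, whereas the paper splits into the cases $\lambda\subseteq B_d(x,M)$ and $\lambda\not\subseteq B_d(x,M)$ with estimates centered at $p$; the substance is identical.
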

\begin{proof}
Let $z\in [x,y]$, then $d(x,p)\le d(x,z)+d(z,p)\le M +d(z,p)$. Thus
\begin{align}\label{estimate 1}
l_{d_{\epsilon}}([x, y]) = \int_{[x, y]}\rho_{\epsilon}(s)\, ds \leq e^{-\epsilon (d(p, x)-M)}d(x, y).
\end{align}
	Let $\lambda$ be a rectifiable curve from $x$ to $y$. We examine two cases.\\
	\textbf{Case 1} : $\lambda \subseteq B_d(x, M)$\\
	 For $z \in \lambda$, we have $d(p, z) \leq d(p, x)+d(x, z) \leq d(p, x)+M$. Hence
	 \begin{align}\label{estimate 2}
		l_{d_{\epsilon}}(\lambda) \geq e^{-\epsilon(d(p, x)+M)}l_d(\lambda) \geq e^{-\epsilon(d(p, x)+M)}d(x, y).
	\end{align}
	\textbf{Case 2} : $\lambda \not\subseteq B_d(x, M)$\\
	In this case we have
	 \begin{align}\label{estimate 3}
	 	l_{d_{\epsilon}}(\lambda) &\geq \int_{\lambda \cap B_d(x, M)}  \rho_{\epsilon}\, ds \geq e^{-\epsilon (d(p, x)+M)} d(x, y)
	 \end{align}
	 Combining \eqref{estimate 1}, \eqref{estimate 2}, and \eqref{estimate 3}, we have 
	 \[
	 l_{d_{\epsilon}}([x, y]) \leq e^{2 \epsilon M}d_{\epsilon}(x, y).
	 \]
\end{proof}

Lastly, we prove the following.
\begin{proposition}\label{local to global}
	Let $(X, d)$ be a geodesic metric space and $X^{\epsilon}$ be the uniformized space. Suppose the Gehring-Hayman property for Gromov sequences holds and $\Phi : \partial_{G}X \to \partial_{d_{\epsilon}}X^{\epsilon}$ is bijective. Then the Gehring-Hayman theorem holds. 
\end{proposition}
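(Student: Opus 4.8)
The plan is to argue by contradiction, upgrading the localized Gehring--Hayman estimate to the global one by a compactness argument on the metric boundary. Suppose the Gehring--Hayman theorem fails. Then for each $k \in \mathbb{N}$ there are points $x_k, y_k \in X$ with $l_{d_\epsilon}([x_k,y_k]) > k\, d_\epsilon(x_k,y_k)$. Two preliminary reductions tame these pairs. First, a direct computation shows that the $d_\epsilon$-length of \emph{any} $d$-geodesic $[a,b]$ is at most $2/\epsilon$: parametrising $[a,b]$ by arclength and using the elementary bound $d(p,\cdot) \ge \max\big(d(p,a)-d(a,\cdot),\, d(p,b)-d(b,\cdot)\big)$ valid on a geodesic, one integrates $\rho_\epsilon$ and obtains $l_{d_\epsilon}([a,b]) \le \tfrac{2}{\epsilon}e^{-\epsilon(a|b)_p} \le \tfrac{2}{\epsilon}$. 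Hence $d_\epsilon(x_k,y_k) < 2/(k\epsilon) \to 0$. Second, by Proposition~\ref{local is fine.}, for every $M>0$ only finitely many $k$ have $d(x_k,y_k) \le M$; therefore $d(x_k,y_k)\to\infty$, and in particular $x_k \ne y_k$ (so $d_\epsilon(x_k,y_k)>0$) for $k$ large.

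Next I would send the pairs to the boundary. Since $X$ is proper, $\overline{X^\epsilon}$ is compact, so after passing to a subsequence $x_k \to \zeta$ and $y_k \to \zeta'$ in $\overline{X^\epsilon}$. From $d_\epsilon(x_k,y_k)\to 0$ we get $\zeta=\zeta'$, and then from $d(x_k,y_k)\to\infty$ together with the fact that $d$ and $d_\epsilon$ induce the same topology on $X$ (Harnack inequality \eqref{Harnack}) we get $\zeta \notin X$, i.e.\ $\zeta \in \partial_{d_\epsilon}X^\epsilon$. Applying Lemma~\ref{Gromov subsequence} to $(x_k)_k$, then to the resulting subsequence of $(y_k)_k$, and recalling that a subsequence of a Gromov sequence is again a Gromov sequence, we may pass to a further subsequence (and relabel) so that $(x_k)_k$ and $(y_k)_k$ are \emph{both} Gromov sequences, both converging to $\zeta$, while still $l_{d_\epsilon}([x_k,y_k]) > k\, d_\epsilon(x_k,y_k)$. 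Here the hypothesis that $\Phi$ is bijective enters, through Lemma~\ref{inverse is well defined}: it forces $(x_k)_k \sim (y_k)_k$, that is, $(x_k\,|\,y_k)_p \to \infty$.

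Finally I would interlace the two sequences, setting $z_{2k-1}:=x_k$ and $z_{2k}:=y_k$. Because $X$ is $\delta$-hyperbolic and $(x_k)_k$, $(y_k)_k$ are equivalent Gromov sequences, a short case analysis on whether $z_n$ and $z_m$ come from the $x$-part or the $y$-part (of the same flavour as Remark~\ref{Gromov sequence remark}) shows that $(z_n)_n$ is itself a Gromov sequence. The Gehring--Hayman property for Gromov sequences then supplies a single constant $C\ge 1$ with $l_{d_\epsilon}([z_n,z_m]) \le C\, d_\epsilon(z_n,z_m)$ for all $n,m$; taking the consecutive pair $(z_{2k-1},z_{2k})=(x_k,y_k)$ gives $l_{d_\epsilon}([x_k,y_k]) \le C\, d_\epsilon(x_k,y_k)$ for every $k$, which contradicts the defining property of the pairs as soon as $k>C$.

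The main obstacle is the middle paragraph: one must arrange the counterexample pairs to land at a \emph{single} metric boundary point and then to pass to honest equivalent Gromov sequences. This is precisely where the bijectivity of $\Phi$ is indispensable --- without it the two tails could converge to the same point of $\partial_{d_\epsilon}X^\epsilon$ yet represent distinct points of $\partial_G X$, so that the interlaced sequence would fail to be Gromov and the localized property would give no uniform control. Once this is in place, the interlacing step and the resulting contradiction are routine.
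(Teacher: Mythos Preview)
Your argument is correct and follows essentially the same route as the paper's proof: assume failure, use the uniform bound $l_{d_\epsilon}([x,y])\le \tfrac{2}{\epsilon}e^{-\epsilon(x|y)_p}$ to force $d_\epsilon(x_k,y_k)\to 0$, invoke Proposition~\ref{local is fine.} to rule out bounded pairs, push the pairs to a single point of $\partial_{d_\epsilon}X^\epsilon$, extract equivalent Gromov subsequences via Lemmas~\ref{Gromov subsequence} and~\ref{inverse is well defined}, interlace, and contradict the localized hypothesis. The only cosmetic differences are that the paper cites \cite[Lemma~4.10]{BHK} for the length bound and \cite[Lemma~5.3(3)]{Jussi} for the interlacing step, and reaches the boundary via Arzel\`a--Ascoli on the geodesics $[p,x_n]$ rather than via compactness of $\overline{X^\epsilon}$ (the latter is true here but deserves a word of justification, since it uses properness and the estimate you already proved).
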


\begin{proof}
	Suppose that the Gehring-Hayman theorem does not hold. Then there exist $(x_n)_n$, $(y_n)_n \subseteq X^{\epsilon}$ such that 
	\begin{equation}\label{GH proof estimate 1}
		l_{d_{\epsilon}}([x_n, y_n]) \geq n d_{\epsilon}(x_n, y_n).
	\end{equation}
	Note that by the proof of \cite[Lemma~4.10]{BHK}, we know that the LHS of \eqref{GH proof estimate 1} is bounded above by a uniform constant $C>0$ which only depends on $\epsilon$ and $\delta$. Dividing both sides of \eqref{GH proof estimate 1} by $n$ implies that 
	\begin{equation}\label{GH proof estimate 2}
	d_{\epsilon}(x_n, y_n) \leq C/n\to 0,
	\end{equation}
	as $n \to \infty$. We first claim that both $(x_n)_n$ and $(y_n)_n$ are unbounded. For this sake, it is enough to prove that either of these two sequences is unbounded. In fact, if one of these two sequences is unbounded and the other one is bounded, then  by taking a subsequence if needed, $d_{\epsilon}(x_n, y_n)$ is uniformly bounded from below by a positive constant, which contradicts \eqref{GH proof estimate 2}. If both $(x_n)_n$ and $(y_n)_n$ are bounded, then Proposition \ref{local is fine.} gives us a contradiction for a large enough $n \in \mathbb{N}$. Hence the claim follows. 
	
	By Arzela-Ascoli theorem, there exist a subsequence $(x_{n_k})_k$ and a geodesic ray $\gamma$ such that 
		$d_{\epsilon}(x_{n_k}, \gamma(\infty)) \to 0$ 
	as $k \to \infty$ where $\gamma(\infty) \in \partial_{d_{\epsilon}}X^{\epsilon}$ is the limit of the sequence $(\gamma(k))_k$ with respect to $d_{\epsilon}$. We note that $d_{\epsilon}(\gamma(\infty), y_{n_k}) \to 0$ as $k \to \infty$ since $d_{\epsilon}(x_{n_k}, y_{n_k}) \to 0$ as $k \to \infty$. By Lemma \ref{Gromov subsequence}, we can extract further subsequences $(x_{n_k})_k$ and $(y_{n_k})_k$ that are Gromov sequences.  By Lemma \ref{inverse is well defined},  $(x_{n_k})_k$ and $(y_{n_k})_k$ are equivalent to each other as Gromov sequences. By \cite[Lemma 5.3 (3)]{Jussi}, the sequence $(z_n)_n:=(x_{n_1}, y_{n_1}, x_{n_2}, y_{n_2, \cdots})$ is a Gromov sequence. Since the Gehring-Hayman property for Gromov sequences holds, there exists a constant $C\geq 1$ such that for all $k, l \in \mathbb{N}$,
	\[
	l_{d_{\epsilon}}([x_{n_k}, y_{n_l}]) \leq C d_{\epsilon} (x_{n_k}, y_{n_l}),
	\]
	which contradicts \eqref{GH proof estimate 1}. This completes the proof.
\end{proof}

\subsection{(4) $\rightarrow$ (3)}\label{From uniformity to GH}
In this subsection we prove that $X^{\epsilon}$ being uniform implies the Gehring-Hayman theorem. To do this, we first recall the quasihyperbolization of a uniform space.
\begin{definition}(Quasihyperbolization)
	Let $(\Omega, d)$ be an $A$-uniform space.  The \emph{quasihyperbolic metric $k$} is defined by
	\[
	k(x, y):= \inf\limits_{\gamma}\int^{l_d(\gamma)}_{0}\frac{1}{d(\gamma(t))}\,dt
	\]
	where the infimum is taken over all rectifiable curves $\gamma$ from $x$ to $y$ and $d(\cdot):= \dist_d( \cdot , \partial \Omega)$.  
\end{definition}
By \cite[Theorem 3.6]{BHK}, if $(\Omega, d)$ is an $A$-uniform space, then $(\Omega, k)$ is a proper geodesic $\delta$-Gromov hyperbolic space for some $\delta=\delta(A)$. Moreover, if $(\Omega, d)$ is bounded, then $(\Omega, k)$ is $M$-roughly starlike for some $M=M(A)$. 

\begin{definition}
	Let $(X, d)$ be a metric space and $C>0$. We say that a curve $\gamma : [a, b] \to X$ is a \emph{$C$-quasigeodesic} if 
	\begin{equation}
		\frac{1}{C}|t-t'|\leq d(\gamma(t), \gamma(t')) \leq C|t-t'|
	\end{equation}
	holds for all $t, t' \in [a, b]$.
\end{definition}

The following is essentially  \cite[Proposition~4.37]{BHK}. We remark that the restriction on the unifromization parameter $\epsilon$ is not needed to prove it.
\begin{proposition}
Let $(X,d)$ be an $M$-roughly starlike $\delta$-Gromov hyperbolic space. Then for $\epsilon>0$ there exists $C=C(\epsilon, M)$ such that for alla $x,y\in X$
\begin{equation}\label{key inequality0}
	\frac{1}{C}d(x,y)\le k_\epsilon(x,y)\le C d(x,y),
\end{equation}
where $k_\epsilon$ is the quasihyperbolic metric of $X^{\epsilon}$.
\end{proposition}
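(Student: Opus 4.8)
The plan is to compare the uniformized metric $d_\epsilon$ with the quasihyperbolic metric $k_\epsilon$ of $X^\epsilon$ by exploiting the well-known fact that, for spaces arising from the $\rho_\epsilon$-uniformization, the density $\rho_\epsilon(x) = e^{-\epsilon d(p,x)}$ is comparable to the distance-to-the-boundary function $d_\epsilon(\cdot) := \dist_{d_\epsilon}(\cdot, \partial_{d_\epsilon} X^\epsilon)$. This is precisely the content of \cite[Lemma~4.16 and Proposition~4.37]{BHK}: under the $M$-roughly starlike assumption one has
\begin{equation*}
	\frac{1}{C_0}\, e^{-\epsilon d(p,x)} \leq d_\epsilon(x) \leq C_0\, e^{-\epsilon d(p,x)}
\end{equation*}
for all $x \in X$, with $C_0 = C_0(\epsilon, M, \delta)$. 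The lower bound here is where rough starlikeness enters: given $x$, one follows a geodesic ray through (within distance $M$ of) $x$ out to infinity; along that ray $\rho_\epsilon$ decays geometrically, so the $d_\epsilon$-length of the tail is $\lesssim \epsilon^{-1} e^{-\epsilon d(p,x)}$, giving a point of $\partial_{d_\epsilon} X^\epsilon$ within that distance. The upper bound is softer and uses only properness/completeness considerations together with the Harnack inequality \eqref{Harnack}.

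Granting this comparison, the proof of \eqref{key inequality0} is a two-sided length estimate. For any rectifiable curve $\gamma$ from $x$ to $y$ in $X$, the Harnack-type control \eqref{Harnack} shows that $\rho_\epsilon$ oscillates by at most a bounded factor along short $d_\epsilon$-subarcs; combined with $d_\epsilon(\cdot) \asymp \rho_\epsilon(\cdot)$, the quasihyperbolic integrand $1/d_\epsilon(\gamma(t))$ is comparable to $\rho_\epsilon(\gamma(t))/\rho_\epsilon(\gamma(t))^2$... more precisely, first I would note that $k_\epsilon(x,y) = \inf_\gamma \int_0^{\ell_{d_\epsilon}(\gamma)} \frac{dt}{d_\epsilon(\gamma(t))}$, where arclength is taken in the $d_\epsilon$ metric, and $d(s) = d_\epsilon$ along $\gamma$ is bounded below by $C_0^{-1}\rho_\epsilon$. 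Then, parametrizing by $d$-arclength instead, $d s_{d_\epsilon} = \rho_\epsilon\, ds_d$, so $\int \frac{ds_{d_\epsilon}}{d_\epsilon} \leq C_0 \int \frac{\rho_\epsilon\, ds_d}{\rho_\epsilon} = C_0\, \ell_d(\gamma)$. Taking the infimum over $\gamma$ (e.g. over $d$-geodesics $[x,y]$, of finite $d$-length) gives $k_\epsilon(x,y) \leq C_0\, d(x,y)$. For the reverse inequality, the same substitution gives $k_\epsilon(x,y) \geq C_0^{-1} \int \frac{\rho_\epsilon\, ds_d}{\rho_\epsilon} = C_0^{-1}\ell_d(\gamma)\geq C_0^{-1} d(x,y)$ for every competitor $\gamma$, hence $k_\epsilon(x,y) \geq C_0^{-1} d(x,y)$.

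So the logical skeleton is: (i) establish $d_\epsilon(\cdot) \asymp \rho_\epsilon(\cdot)$ on $X$ with constant depending only on $\epsilon, M, \delta$ — citing \cite[Proposition~4.37]{BHK} or reproving the lower bound via the roughly-starlike geodesic ray and the upper bound via \eqref{Harnack}; (ii) rewrite the quasihyperbolic length integral in $d$-arclength using $ds_{d_\epsilon} = \rho_\epsilon\, ds_d$; (iii) sandwich the integrand using (i) so that the $\rho_\epsilon$ factors cancel, leaving $\ell_d(\gamma)$ up to the constant $C_0$; (iv) take infima to pass between lengths and distances, using that $d$-geodesics exist and are rectifiable in $d$.

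I expect the only genuine obstacle to be step (i), and within it the lower bound $d_\epsilon(x) \gtrsim e^{-\epsilon d(p,x)}$: one must produce a boundary point of $X^\epsilon$ that is $d_\epsilon$-far from $x$, and without some geometric input (here, rough starlikeness providing an escaping geodesic ray) this can genuinely fail. Everything after that — the arclength change of variables and the cancellation of $\rho_\epsilon$ — is routine, since \eqref{Harnack} and the comparison (i) are exactly what is needed to control the integrands, and the constants track cleanly through $C = C(\epsilon, M)$ (absorbing the $\delta$-dependence, which is harmless as $\delta$ is fixed with $X$). One should also be slightly careful that the infimum defining $k_\epsilon$ is over curves in the noncomplete space $X^\epsilon = X$, but since $d$-geodesics $[x,y]$ lie in $X$ and have finite $d_\epsilon$-length by \eqref{estimate 1}, they are admissible competitors, so no completion issue arises.
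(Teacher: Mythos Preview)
Your proposal is correct and follows essentially the same route as the paper's proof: invoke \cite[Lemma~4.16]{BHK} for the comparison $d_\epsilon(\cdot)\asymp\rho_\epsilon(\cdot)$, change variables via $ds_{d_\epsilon}=\rho_\epsilon\,ds_d$ to obtain $l_{k_\epsilon}(\gamma)\asymp l_d(\gamma)$ for every rectifiable $\gamma$, then take the infimum over all curves for one inequality and test against a $d$-geodesic for the other. One small slip in your side discussion: the geodesic-ray argument you describe (producing a boundary point within $d_\epsilon$-distance $\lesssim e^{-\epsilon d(p,x)}$ of $x$) yields the \emph{upper} bound $d_\epsilon(x)\lesssim\rho_\epsilon(x)$, not the lower one, so rough starlikeness enters in the opposite inequality from what you state---but since you cite \cite{BHK} for the full two-sided estimate anyway, this does not affect the proof.
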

\begin{proof}
Let $\gamma$ be a rectifiable curve in $(X,d)$ arc-length parametrized by $d$. By Lemmas A.5 and A.7 in Appendix in \cite{BHK}  we know that $\gamma$ is also rectifiable in $(X,d_\epsilon)$ and moreover there exists a reparametrization $\gamma^o:[0,l_{d_\epsilon}(\gamma)]\to X$ of $\gamma$ with respect to $d_{\epsilon}$ such that $\gamma=\gamma^o\circ s_\epsilon$, where $s_\epsilon(t):=l_{d_\epsilon}(\gamma|_{0,t})=\int_0^t\rho_\epsilon(\gamma(t))\,dt$. Thus setting $d_{\epsilon}(x):=\dist_{d_{\epsilon}} (x, \partial_{d_{\epsilon}} X^{\epsilon})$, we have
\[
l_{k_\epsilon}(\gamma)=\int_0^{l_{d_\epsilon}(\gamma)}\frac{1}{d_\epsilon(\gamma^o(t))}\,dt=\int_0^{l_d(\gamma)}\frac{s_\epsilon'(t)}{d_\epsilon(\gamma^o\circ s_\epsilon(t))}\,dt= \int_0^{l_d(\gamma)}\frac{\rho_\epsilon(\gamma(t))}{d_\epsilon(\gamma(t))}\,dt,
\]
where the first equality comes from \cite[Lemma~A.7]{BHK} and we used the change of variables in the second equality. By \cite[Lemma 4.16]{BHK} there exists $C=C(\epsilon, M)\ge 1$ such that for every $x\in X$ we have
\begin{equation}\label{key inequality1}
	\frac{1}{C}\le \frac{\rho_\epsilon(x)}{d_\epsilon(x)}\le C.
\end{equation}
Combining the above we get
\begin{equation}\label{key inequality2}
	\frac{1}{C}l_d(\gamma)\le l_{k_\epsilon}(\gamma)\le C l_d(\gamma).
\end{equation}
Let $x,y\in X$ and let $\gamma$ be rectifiable curve joining them. The above gives that the first inequality of \eqref{key inequality0} by simply taking the infimum over all curves joining $x$ and $y$.  The second inequality of \eqref{key inequality0} is obtained by choosing $\gamma$ to be the geodesic between $x$ and $y$ with respect to $d$.
\end{proof}

The following lemma is originally stated for geodesics with respect to $k_{\epsilon}$ in \cite[Theorem~2.10]{BHK}. We note that the consequence of \cite[Theorem~2.10]{BHK} still holds true for quasigeodesics with respect to $k_{\epsilon}$ by the straightforward modification of their proof, see also \cite[Fact~2.10]{H}.
\begin{lemma}(\cite[Theorem~2.10]{BHK})\label{quasihyperbolization key}
	Let $(\Omega, d)$ be an $A$-uniform space and $k$ be the quasihyperbolic metric with respect to  $d$. Then every $C$-quasigeodesic with respect to $k_{\epsilon}$ is a $B$-uniform curve with respect to $d$ for some $B:=B(A, C)>0$. 
\end{lemma}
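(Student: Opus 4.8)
The plan is to run the proof of \cite[Theorem~2.10]{BHK} essentially verbatim and to verify that the one place where the exact geodesic hypothesis is used survives, at the cost of a multiplicative constant, when the curve is merely a $C$-quasigeodesic. Throughout write $d(x):=\dist_d(x,\partial\Omega)$ for the boundary distance and let $k$ denote the quasihyperbolic metric of the $A$-uniform space $(\Omega,d)$ (this is the metric denoted $k_\epsilon$ in the application, where $\Omega=X^{\epsilon}$). Let $\gamma$ be a $C$-quasigeodesic with respect to $k$ joining $x$ and $y$. After reparametrizing by $k$-arclength one may assume $\gamma\colon[0,L]\to\Omega$ with $L=l_k(\gamma)$ and
\[
k(\gamma(s),\gamma(t))\le |s-t|\le C'\,k(\gamma(s),\gamma(t))\qquad(s,t\in[0,L]),
\]
for a constant $C'$ depending only on $C$ (the left inequality is length $\ge$ distance, valid for any curve; the right one encodes the quasigeodesic property). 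The goal is to produce $B=B(A,C)$ for which $\gamma$ satisfies conditions (1) and (2) of Definition~\ref{uniform curve} with respect to $d$.

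The first step is to isolate the ingredients that are purely metric and do not see the curve. These are the lower bounds
\[
k(u,v)\ge\Big|\log\tfrac{d(u)}{d(v)}\Big|,\qquad k(u,v)\ge\log\Big(1+\tfrac{d(u,v)}{d(u)\wedge d(v)}\Big),
\]
valid in any noncomplete locally compact $\Omega$, together with the uniformity upper bound $k(u,v)\le C_A\log\big(1+\tfrac{d(u,v)}{d(u)\wedge d(v)}\big)$ with $C_A=C_A(A)$, which is obtained by integrating the quasihyperbolic length element along an $A$-uniform curve joining $u$ and $v$ and invoking its cone condition, and is the only place where $A$-uniformity enters (see \cite{BHK}). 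The first lower bound yields the exponential control $d(\gamma(s))\le d(\gamma(t))\,e^{|s-t|}$ along $\gamma$, which will be used repeatedly.

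With these in hand I would organize $\gamma$ by depth, exactly as in \cite{BHK}. Choose a point $w=\gamma(t_\ast)$ of almost maximal boundary distance $D$, and record that $t\mapsto d(\gamma(t))$ is, up to a bounded multiplicative error controlled by $C$, unimodal (increasing up to $t_\ast$, then decreasing); this is the quasihyperbolic convexity exploited in \cite{BHK}. Decompose each half of $\gamma$ into dyadic depth bands $d(\gamma(\cdot))\in[2^{-j-1}D,2^{-j}D]$. The contribution of a band to $l_d(\gamma)=\int_0^L d(\gamma(t))\,dt$ is at most (band depth) $\times$ (the $k$-time $\gamma$ spends in the band), and this $k$-time equals the $k$-length of the corresponding subarc. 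For a geodesic this length equals the $k$-distance of the subarc's endpoints and is therefore bounded, via the uniformity upper bound, by a constant independent of $j$; summing the geometric series in $j$ then controls $l_d(\gamma)$ by $D\asymp d(z)$ on the shallow side of $z$ for the cone condition (2) and by $d(x,y)$ for the length condition (1). The single modification for a $C$-quasigeodesic is to replace this identity by $|s-t|=l_k(\gamma|_{[s,t]})\le C'\,k(\gamma(s),\gamma(t))$, so that each per-band bound acquires the factor $C'$ while the geometric decay across bands is untouched; the series still converges and delivers $B=B(A,C)$, the near-maximizer $w$ contributing only additive constants absorbed into $B$.

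The step demanding the most care is bounding the $k$-time $\gamma$ spends at a fixed depth, i.e. ruling out that the quasigeodesic lingers unboundedly in a single band. This is precisely where the factor $C'$ is spent: the uniformity upper bound forces two points of comparable depth that are $d$-far apart to be $k$-far apart, so a long $k$-sojourn in a band entails genuine $k$-displacement, which the quasigeodesic inequality bounds; one must also check that the resulting dependence on $C$ stays at most exponential, which is permissible since $B$ may depend on $C$. As an independent confirmation of the statement and its stated dependence, note that since $(\Omega,d)$ is $A$-uniform the space $(\Omega,k)$ is $\delta(A)$-Gromov hyperbolic by \cite[Theorem~3.6]{BHK}; by stability of quasigeodesics in hyperbolic spaces \cite{BH} the curve $\gamma$ lies within $k$-Hausdorff distance $H=H(\delta,C)$ of a genuine $k$-geodesic $\sigma$ with the same endpoints, and $\sigma$ is $B_0(A)$-uniform by \cite[Theorem~2.10]{BHK}; transferring conditions (1) and (2) across the $k$-gap — using the fundamental inequalities to convert $k\le H$ into the boundary-distance bounds $e^{\pm H}$, and the quasigeodesic $k$-length bound to control the $d$-lengths — reproduces the conclusion with $B=B(A,C)$.
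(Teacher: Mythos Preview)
The paper does not actually prove this lemma; it merely records it as a citation, noting before the statement that ``the consequence of \cite[Theorem~2.10]{BHK} still holds true for quasigeodesics with respect to $k_{\epsilon}$ by the straightforward modification of their proof, see also \cite[Fact~2.10]{H}.'' Your proposal is entirely consistent with that remark: you sketch precisely the modification of the depth-band argument in \cite{BHK}, correctly identifying that the only change is replacing the geodesic identity $|s-t|=k(\gamma(s),\gamma(t))$ by the quasigeodesic bound $|s-t|\le C'\,k(\gamma(s),\gamma(t))$, which inflates the per-band estimates by a factor depending on $C$ without disturbing the geometric summation.

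Your second route---using that $(\Omega,k)$ is $\delta(A)$-hyperbolic, invoking stability of quasigeodesics to land within bounded $k$-Hausdorff distance of a true $k$-geodesic, and then transferring the uniform-curve conditions across that gap via the fundamental inequalities---is in fact the argument behind \cite[Fact~2.10]{H}, which the paper cites. So both of your approaches match the two references the paper offers in lieu of a proof. One small caution: your claim that $t\mapsto d(\gamma(t))$ is ``unimodal up to a bounded multiplicative error'' for a quasigeodesic is not literally needed and is slightly delicate to justify as stated; the band argument only requires that the total $k$-time spent in each dyadic depth level is bounded in terms of $A$ and $C$, which you correctly derive from the quasigeodesic inequality together with the uniformity upper bound on $k$, so this does not affect the validity of your sketch.
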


We are now in a position to prove that $X^{\epsilon}$ being uniform implies the Gehring-Hayman theorem.
\begin{proposition}\label{uniform to GH}
	Let $(X, d)$ be an $M$-roughly starlike $\delta$-Gromov hyperbolic space. If $X^{\epsilon}$ is an $A$-uniform space, the Gehring-Hayman theorem holds for $X^{\epsilon}$.
\end{proposition}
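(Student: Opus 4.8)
The plan is to run the argument entirely through the quasihyperbolization machinery just set up: the bi-Lipschitz equivalence \eqref{key inequality0} between the quasihyperbolic metric $k_\epsilon$ of $X^\epsilon$ and the original hyperbolic metric $d$, together with Lemma \ref{quasihyperbolization key}, which turns $k_\epsilon$-quasigeodesics into $d_\epsilon$-uniform curves. The point is that a $d$-geodesic, after the right parametrization, is automatically a $k_\epsilon$-quasigeodesic, hence a $d_\epsilon$-uniform curve, and the first clause in the definition of a uniform curve is exactly the Gehring-Hayman estimate \eqref{GH theorem}.

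First I would fix $x,y\in X^\epsilon$ and take $\gamma\colon[0,d(x,y)]\to X$ to be a geodesic $[x,y]$ with respect to $d$, parametrized by $d$-arclength, so that $d(\gamma(s),\gamma(t))=|s-t|$ for all $s,t$. Feeding the pair $\gamma(s),\gamma(t)$ into \eqref{key inequality0} gives
\[
\tfrac1C|s-t|\le k_\epsilon(\gamma(s),\gamma(t))\le C|s-t|,
\]
with $C=C(\epsilon,M)$ the constant of the preceding proposition; in other words $\gamma$ is a $C$-quasigeodesic with respect to $k_\epsilon$. Since $X^\epsilon$ is an $A$-uniform space by hypothesis, Lemma \ref{quasihyperbolization key} applies (with $\Omega=X^\epsilon$ and $d_\epsilon$ playing the role of $d$, so that the ambient quasihyperbolic metric is $k_\epsilon$): the curve $\gamma$ is a $B$-uniform curve with respect to $d_\epsilon$, where $B=B(A,C)=B(A,\epsilon,M)$ depends only on the data. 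Then clause (1) of Definition \ref{uniform curve} applied to $\gamma$ yields
\[
l_{d_\epsilon}([x,y])=l_{d_\epsilon}(\gamma)\le B\,d_\epsilon(\gamma(0),\gamma(d(x,y)))=B\,d_\epsilon(x,y),
\]
which is precisely \eqref{GH theorem} with $A=B$. As $x,y$ were arbitrary, the Gehring-Hayman theorem holds for $X^\epsilon$.

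The only substantive step — the ``hard part,'' such as it is — is promoting the $d$-geodesic from a mere rectifiable $d_\epsilon$-curve to a genuine $k_\epsilon$-quasigeodesic; this is exactly where \eqref{key inequality0}, and hence the $M$-roughly starlike hypothesis, is used, and everything after that is a direct invocation of Lemma \ref{quasihyperbolization key} and Definition \ref{uniform curve}. I would also point out that, unlike the original route in \cite[Section~5]{BHK}, no smallness restriction on $\epsilon$ is needed here, and that only clause (1) of the uniform-curve definition is used (clause (2) comes along for free but is irrelevant). Combined with the implications proved in the earlier subsections, this closes the chain of equivalences in Theorem \ref{GH decomposition 1}.
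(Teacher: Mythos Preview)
Your proof is correct and follows essentially the same route as the paper: show that a $d$-geodesic, in its $d$-arclength parametrization, is a $C$-quasigeodesic with respect to $k_\epsilon$, then invoke Lemma~\ref{quasihyperbolization key} and read off clause~(1) of Definition~\ref{uniform curve}. The only cosmetic difference is that you appeal to the metric comparison \eqref{key inequality0} whereas the paper cites the length comparison \eqref{key inequality2}; since the former is derived from the latter in the preceding proposition, the arguments are interchangeable, and if anything your choice is the cleaner one for verifying the quasigeodesic condition directly.
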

\begin{proof}
	
	By \eqref{key inequality2}, there exists $C>0$ such that for every geodesic $\gamma : [0, l_d(\gamma)] \to X$ with respect to $d$, we have
	\[
	\frac{1}{C}|t-t'|\leq l_{k_{\epsilon}}(\gamma) \leq C|t-t'|
	\]
	 for any $t, t' \in [0, l_d(\gamma)]$. This implies that geodesics with respect to $d$ are $C$-quasigeodesics with respect to $k_{\epsilon}$.  The conclusion follows by Lemma~\ref{quasihyperbolization key}.
\end{proof}
\begin{remark}
	By a careful observation of proofs in this subsection, one might notice that $\delta$-Gromov hyperbolicity does not play any role in proving Proposition \ref{uniform to GH}. This tells us that Proposition \ref{uniform to GH} holds as long as $(X, d)$ is $M$-roughly starlike and $X^{\epsilon}$ is uniform although we assume $\delta$-Gromov hyperbolicity, following convention. 
\end{remark}

\section{Critical exponents for some examples}\label{examples}
In this section we examine critical exponents for the uniformized space to be a uniform space in the case of the hyperbolic spaces, the model spaces $\mathbb{M}^{\kappa}_n$ of the sectional curvature $\kappa<0$ with the dimension $n \geq 2$ and hyperbolic fillings. 
\subsection{The hyperbolic spaces and the model spaces}
In this subsection we first show that the critical exponent for the uniformized space of the  hyperbolic spaces to be a uniform space is $\epsilon~=~1$. We first recall that for $n \geq 2$, the Poincar\'e ball model of the hyperbolic space is the unit open ball $\mathbb{B}^n \subseteq \mathbb{R}^n$ with the metric defined by 
\[
g_{\mathbb{B}^n}=\frac{4}{(1-(\sum_{i=1}^{n}x_i^2))^2}\sum_{i=1}^{n}dx_i^2,
\]
where $(x_1, \cdots, x_n) \in \mathbb{B}^n$. Note that in the case of $2$-dimensional hyperbolic space, one can see that in polar coordinates with respect to the hyperbolic metric we have
\begin{equation}\label{polar coordinate}
	g_{\mathbb{B}^2}=dr^2+\text{sinh}(r)^2d\theta^2.
\end{equation}
Let $d_{\mathbb{B}^n}$ be the Riemannian distance induced from the metric $g_{\mathbb{B}^n}$ on $\mathbb{B}^n$. We first uniformize the metric space $(X, d):=(\mathbb{B}^2, d_{\mathbb{B}^2})$ with the base point $p=~(0,0)$ and $\epsilon>1$. Let $\gamma$ and $\tilde{\gamma}$ be two geodesic rays with $\gamma(0)=~\tilde{\gamma}(0)=~(0, 0)$. From \eqref{polar coordinate} and $\epsilon > 1$, we have
\begin{align}
	d_{\epsilon}(\gamma(k), \tilde{\gamma}(k)) &\leq 2\pi e^{-\epsilon k}\text{sinh}(k) \to 0,
\end{align}
as $k \to \infty$. This implies that $\partial_{d_{\epsilon}}X^{\epsilon}$ is one point while $\partial_{G}X$ is the unit circle. Therefore by Theorem~\ref{GH decomposition 1}, $X^{\epsilon}$ is not a uniform space.  Moreover, since we can isometrically embed $(\mathbb{B}^2, d_{\mathbb{B}^2})$ into $(\mathbb{B}^n, d_{\mathbb{B}^n})$ for any $n \geq 3$, two geodesic rays $\gamma$ and $\tilde{\gamma}$ that are not equivalent in $(\mathbb{B}^2, d_{\mathbb{B}^2})$ can be seen as the ones in $(\mathbb{B}^n, d_{\mathbb{B}^n})$. The claim for the higher dimensional case follows by looking at those two geodesic rays $\gamma$ and $\tilde{\gamma}$ and the fact that $d_{\epsilon}(\gamma(k), \tilde{\gamma}(k)) \to 0$ as $k \to \infty$. On the other hand, Butler showed in \cite[Proposition~4.11 and Proposition~4.12]{B} that the Gehring-Hayman theorem holds for any CAT($-1$) space if  $0<\epsilon \leq 1$. Although his uniformization procedure is different from the one in \cite{BHK}, all the arguments to prove \cite[Proposition~4.11 and Proposition~4.12]{B} are valid for the uniformization developed by Bonk, Heinonen and Koskela. Hence we obtain the following corollary.
\begin{corollary}
	The critical exponent for the uniformized space of the hyperbolic space $(\mathbb{B}^n, d_{\mathbb{B}^n})$ to be a uniform space is $\epsilon=1$.
\end{corollary}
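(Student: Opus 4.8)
The plan is to assemble the corollary from the two directions already established in the surrounding discussion. For the direction $\epsilon \leq 1$, one invokes Butler's result: by \cite[Proposition~4.11 and Proposition~4.12]{B}, the Gehring-Hayman theorem holds for every CAT($-1$) space whenever $0<\epsilon\leq 1$. As noted in the preceding paragraph, although Butler's uniformization uses Busemann functions rather than the construction of \cite{BHK}, the arguments transfer verbatim to the Bonk--Heinonen--Koskela uniformization. Since $(\mathbb{B}^n, d_{\mathbb{B}^n})$ is a CAT($-1$) space and is roughly starlike (indeed $0$-roughly starlike, as every point lies on a geodesic ray through the origin), Theorem~\ref{GH decomposition 1} upgrades the Gehring-Hayman theorem to the statement that $X^\epsilon$ is a uniform space for all $0<\epsilon\leq 1$.

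For the direction $\epsilon>1$, one uses the computation just carried out in this subsection: in polar coordinates \eqref{polar coordinate} for $\mathbb{B}^2$, two distinct geodesic rays $\gamma,\tilde\gamma$ from the origin satisfy $d_\epsilon(\gamma(k),\tilde\gamma(k))\leq 2\pi e^{-\epsilon k}\sinh(k)\to 0$ when $\epsilon>1$, so $\Phi$ is not injective — the whole unit circle $\partial_G \mathbb{B}^2$ collapses to a single metric boundary point. Embedding $\mathbb{B}^2$ isometrically and totally geodesically into $\mathbb{B}^n$ for $n\geq 3$ exhibits, for each such $n$, a pair of inequivalent geodesic rays whose images under $\Phi$ coincide, so $\Phi$ fails to be injective for every $n\geq 2$. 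By Theorem~\ref{GH decomposition 1} (specifically, the failure of condition (2), hence of condition (4)), $X^\epsilon$ is not a uniform space for any $\epsilon>1$.

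Combining the two directions, $X^\epsilon$ is a uniform space precisely for $0<\epsilon\leq 1$, which is exactly the assertion that the critical exponent equals $\epsilon=1$.

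The main obstacle is not in either of these two directions individually, but in justifying the transfer of Butler's Proposition~4.11--4.12 from his uniformization to the one of \cite{BHK}; this is the point where care is needed, and the claim rests on the assertion (already made in the text) that his proofs only use properties shared by both uniformization schemes. A secondary point worth a sentence is verifying that $\mathbb{B}^n$ indeed meets the roughly starlike hypothesis of Theorem~\ref{GH decomposition 1}, so that the equivalence of conditions (1)--(4), and not merely of (1)--(3), is available.
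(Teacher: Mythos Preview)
Your proposal is correct and follows essentially the same approach as the paper: both directions are assembled exactly as in the discussion preceding the corollary, using Butler's \cite[Propositions~4.11--4.12]{B} for $0<\epsilon\leq 1$ and the polar-coordinate estimate together with Theorem~\ref{GH decomposition 1} for $\epsilon>1$, with the higher-dimensional case handled via the isometric embedding $\mathbb{B}^2\hookrightarrow\mathbb{B}^n$. Your added remarks on the rough starlikeness of $\mathbb{B}^n$ and on the transfer of Butler's argument are reasonable clarifications but do not depart from the paper's line of reasoning.
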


Let $\mathbb{M}^{\kappa}_n$ be the model space of the constant sectional curvature $\kappa<0$ with the dimension $n \geq 2$. The space $\mathbb{M}^{\kappa}_n$ is defined by $\mathbb{B}^n$ with the metric $g_{\mathbb{M}^{\kappa}_n}:=~(-1/\kappa) g_{\mathbb{B}^n}$. We  determine the critical exponent for the uniformized space of the model space $\mathbb{M}^{\kappa}_n$ to be a uniform space. Note that $d_{\mathbb{M}^{\kappa}_n}=(1/\sqrt{-\kappa})d_{\mathbb{B}^n}$. We first uniformize $(X, d):=(\mathbb{M}^{\kappa}_2, d_{\mathbb{M}^{\kappa}_2})$ with $p=(0, 0)$ and  $\epsilon > \sqrt{-\kappa}$. In the case of $\mathbb{M}^{\kappa}_2$, we have
	\begin{equation}\label{polar coordinate2}
		g_{\mathbb{M}^{\kappa}_2}=dr^2-\frac{1}{\kappa}\text{sinh}(\sqrt{-\kappa}r)^2d\theta^2
	\end{equation}
	 with respect to the polar coordinates. Hence  $X^{\epsilon}$ is not a uniform space by Theorem~\ref{GH decomposition 1} and looking at the circumference of a circle. The proof for the higher dimensional case follows as in the case of the hyperbolic spaces. We next examine the case where  $0<\epsilon \leq \sqrt{-\kappa}$. For this sake, we first prove the following simple lemma regarding the uniformization parameter for the Gehring-Hayman theorem under the scaling in a metric.
	 
	 \begin{lemma}\label{scaling metric}
	Let $(X, d)$ be a $\delta$-Gromov hyperbolic space. Let $K>0$ and $ \epsilon>0$. Set $(\tilde{X}, \tilde{d}):=(X, Kd)$ and $\tilde{\epsilon}:=\epsilon/K$. Then the Gehring-Hayman theorem holds for $X^{\epsilon}$ if and only if the Gehring-Hayman theorem holds for $\tilde{X}^{\tilde{\epsilon}}$. 
\end{lemma}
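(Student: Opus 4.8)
The plan is to track explicitly how the uniformization metric transforms under the scaling $d \mapsto \tilde d = Kd$ together with the compensating change $\epsilon \mapsto \tilde\epsilon = \epsilon/K$, and to observe that the two uniformized metrics $d_\epsilon$ and $\tilde d_{\tilde\epsilon}$ coincide (up to a harmless constant factor). First I would fix the same base point $p$ for both uniformizations and write down the two density functions: $\rho_\epsilon(x) = e^{-\epsilon d(p,x)}$ for $X$ and $\tilde\rho_{\tilde\epsilon}(x) = e^{-\tilde\epsilon \tilde d(p,x)} = e^{-(\epsilon/K)\cdot Kd(p,x)} = e^{-\epsilon d(p,x)} = \rho_\epsilon(x)$. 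So the two densities are literally equal as functions on $X$. The only difference between the two line integrals defining $d_\epsilon$ and $\tilde d_{\tilde\epsilon}$ is the underlying length element: $ds$ in the $d$-metric versus $d\tilde s = K\,ds$ in the $\tilde d$-metric, since lengths scale by $K$. Hence for every rectifiable curve $\gamma$ one has $\int_\gamma \tilde\rho_{\tilde\epsilon}\,d\tilde s = K\int_\gamma \rho_\epsilon\,ds$, and taking infima over curves from $x$ to $y$ gives $\tilde d_{\tilde\epsilon}(x,y) = K\, d_\epsilon(x,y)$. In particular $\tilde X^{\tilde\epsilon}$ and $X^\epsilon$ are the same set with bi-Lipschitz (indeed, scaled by the constant $K$) metrics.

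Next I would use this identity to compare the two Gehring–Hayman inequalities. A curve $[x,y]$ that is a $d$-geodesic is automatically a $\tilde d$-geodesic (scaling a metric does not change which curves are geodesic), so the curve appearing in \eqref{GH theorem} is the same for both spaces. Its $d_\epsilon$-length and its $\tilde d_{\tilde\epsilon}$-length are related by the same factor $K$: indeed $l_{\tilde d_{\tilde\epsilon}}([x,y]) = \int_{[x,y]} \tilde\rho_{\tilde\epsilon}\,d\tilde s = K \int_{[x,y]} \rho_\epsilon\,ds = K\, l_{d_\epsilon}([x,y])$. Therefore the inequality $l_{d_\epsilon}([x,y]) \le A\, d_\epsilon(x,y)$ for all $x,y$ is equivalent, upon multiplying through by $K$ and substituting, to $l_{\tilde d_{\tilde\epsilon}}([x,y]) \le A\, \tilde d_{\tilde\epsilon}(x,y)$ for all $x,y$, with the very same constant $A$. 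This establishes both directions of the claimed equivalence simultaneously.

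The proof is genuinely routine once the transformation identities are written down; there is no real obstacle. The one point that deserves a sentence of care is the bookkeeping on curve lengths: I should note that for a rectifiable curve $\gamma$, its length with respect to $\tilde d = Kd$ is exactly $K$ times its length with respect to $d$ (immediate from the definition of length as a supremum of polygonal sums), and that rectifiability is preserved; this is what legitimizes the change $ds \mapsto K\,ds$ in the line integrals. One should also remark that $(\tilde X,\tilde d)$ is again a $\delta$-Gromov hyperbolic space — in fact $K\delta$-hyperbolic, and still proper, geodesic and unbounded — so that the statement "the Gehring–Hayman theorem holds for $\tilde X^{\tilde\epsilon}$" is meaningful; but since we are simply asserting an equivalence of two pointwise inequalities, no hyperbolicity constant actually enters the argument.
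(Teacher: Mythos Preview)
Your proposal is correct and follows essentially the same approach as the paper: both arguments reduce to the identity $l_{\tilde d_{\tilde\epsilon}}(\gamma) = K\, l_{d_\epsilon}(\gamma)$ for every rectifiable curve $\gamma$, obtained by observing that the conformal densities coincide while the underlying length element scales by $K$, and then transfer the Gehring--Hayman inequality with the same constant. The paper carries out the change of variables explicitly via arc-length reparametrizations and only writes out one direction, but the content is identical to yours.
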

\begin{proof}
	It is enough to prove one implication. Assume that the Gehring-Hayman theorem holds for $(X, d_\epsilon)$. Let $\gamma$ be a rectifiable curve arc-length parametrized with respect to $d$ and $\tilde\gamma$ be the same curve but with the arc-length parametrization with respect to $\tilde{d}$. Notice that $l_{\tilde d}(\gamma)= kl_d(\gamma)$. Thus
\begin{align*}
		l_{d_{\epsilon}}(\gamma)&=\int_{0}^{l_d(\gamma)}e^{-\epsilon d( p, \gamma(t))}\, dt =\int_{0}^{l_d(\gamma)}e^{-\tilde{\epsilon} \tilde{d}( p, \gamma(t))}\, dt \\ &= \frac{1}{K}\int_0^{l_{\tilde d}(\gamma)}e^{-\tilde\epsilon\tilde d(p,\tilde\gamma(t))}\,dt=\frac{1}{K}l_{\tilde{d}_{\tilde{\epsilon}}}(\gamma),
\end{align*}
where the second last equality comes from the change of variable formula.
	Therefore, for $x,y\in X$ we have  
	\[
	l_{\tilde{d}_{\tilde{\epsilon}}}([x, y]) = Kl_{d_{\epsilon}}([x, y])\leq KC d_{\epsilon}(x, y)= C\tilde{d}_{\tilde{\epsilon}}(x, y),
	\]
	where the Gehring-Hayman theorem for $X^{\epsilon}$ was applied to obtain the above inequality. This completes the proof.
\end{proof}
Applying Lemma \ref{scaling metric} to $(X, d):=(\mathbb{B}^n, d_{\mathbb{B}^n})$, $0<\epsilon\leq 1$ and $K:=1/\sqrt{-\kappa}$, we conclude that the Gehring-Hayman theorem for the uniformized space of $(\mathbb{M}^{\kappa}_n, d_{\mathbb{M}^{\kappa}_n})$ holds if $0<\tilde{\epsilon}\leq \sqrt{-\kappa}$ by \cite[Proposition~4.11 and Proposition~4.12]{B}. Since the Gehring-Hayman theorem implies that the uniformized space is a uniform space, we have the following consequence.
	 \begin{corollary}
	 	Let $n \geq 2$ and $\kappa<0$. The critical exponent for the uniformized space of the model space $\mathbb{M}^{\kappa}_n$ to be a uniform space is $\epsilon = \sqrt{-\kappa}$.
	 \end{corollary}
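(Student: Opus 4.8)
The plan is to combine the scaling lemma just proved (Lemma~\ref{scaling metric}) with the result of Butler \cite[Proposition~4.11 and Proposition~4.12]{B} for CAT($-1$) spaces and with the characterization of uniformity via the Gehring-Hayman theorem (Theorem~\ref{GH decomposition 1}). The corollary has two halves: the \emph{sharpness} part, that $X^{\epsilon}$ fails to be uniform for $\epsilon>\sqrt{-\kappa}$, which was already established above by a direct computation with the polar-coordinate expression \eqref{polar coordinate2} and Theorem~\ref{GH decomposition 1}; and the \emph{positive} part, that $X^{\epsilon}$ is uniform for $0<\epsilon\le\sqrt{-\kappa}$, which is what remains. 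So the proof reduces to assembling these ingredients.

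First I would recall that the hyperbolic space $(\mathbb{B}^n, d_{\mathbb{B}^n})$ is a CAT($-1$) space, so Butler's result gives that the Gehring-Hayman theorem holds for $(\mathbb{B}^n)^{\epsilon'}$ for every $0<\epsilon'\le 1$ (using, as noted in the text, that his arguments carry over to the Bonk--Heinonen--Koskela uniformization). Next I would apply Lemma~\ref{scaling metric} with $(X,d):=(\mathbb{B}^n, d_{\mathbb{B}^n})$, scaling factor $K:=1/\sqrt{-\kappa}$, and $\epsilon':=\epsilon'$; since $\tilde d = K d_{\mathbb{B}^n} = (1/\sqrt{-\kappa})\,d_{\mathbb{B}^n} = d_{\mathbb{M}^{\kappa}_n}$, the scaled space $(\tilde X,\tilde d)$ is exactly $(\mathbb{M}^{\kappa}_n, d_{\mathbb{M}^{\kappa}_n})$, and $\tilde\epsilon = \epsilon'/K = \sqrt{-\kappa}\,\epsilon'$ ranges over $(0,\sqrt{-\kappa}]$ as $\epsilon'$ ranges over $(0,1]$. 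The lemma then transfers the Gehring-Hayman theorem to $(\mathbb{M}^{\kappa}_n)^{\tilde\epsilon}$ for all $0<\tilde\epsilon\le\sqrt{-\kappa}$. Finally, the Gehring-Hayman theorem implies $X^{\epsilon}$ is a uniform space — this is the implication (3) $\Rightarrow$ (4) of Theorem~\ref{GH decomposition 1}, noting that $\mathbb{M}^{\kappa}_n$ is roughly starlike (being CAT($-1$), hence Gromov hyperbolic, with every point lying on a geodesic ray from the origin, so one may take $M=0$). Combined with the sharpness half already shown, $\epsilon=\sqrt{-\kappa}$ is the critical exponent.

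There is essentially no obstacle of substance here; the one point requiring mild care is bookkeeping the direction of the scaling in Lemma~\ref{scaling metric}. The lemma is stated symmetrically (``if and only if''), so one must simply check that applying it with $K=1/\sqrt{-\kappa}$ produces the target metric $d_{\mathbb{M}^{\kappa}_n} = (1/\sqrt{-\kappa})\,d_{\mathbb{B}^n}$ and the correct relation $\tilde\epsilon = \epsilon'/K$, i.e.\ that small parameters for $\mathbb{B}^n$ (up to $1$) correspond to small parameters for $\mathbb{M}^{\kappa}_n$ (up to $\sqrt{-\kappa}$), which is exactly what happens since $\sqrt{-\kappa}>0$. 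A secondary cosmetic point is to record why the rough starlikeness hypothesis in Theorem~\ref{GH decomposition 1} is met, but for the model spaces this is immediate from the polar-coordinate description: every point lies on the geodesic ray $t\mapsto(t,\theta)$ emanating from $p=(0,0)$. Hence the corollary follows at once.

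Thus the write-up is short: state that the sharpness was already proved above; invoke \cite[Proposition~4.11 and Proposition~4.12]{B} and Lemma~\ref{scaling metric} to get the Gehring-Hayman theorem for $0<\epsilon\le\sqrt{-\kappa}$; invoke Theorem~\ref{GH decomposition 1} to upgrade this to uniformity; conclude $\epsilon=\sqrt{-\kappa}$.
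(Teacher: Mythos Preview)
Your proposal is correct and follows essentially the same approach as the paper: the sharpness for $\epsilon>\sqrt{-\kappa}$ is taken from the preceding polar-coordinate computation together with Theorem~\ref{GH decomposition 1}, and the positive direction comes from Butler's Gehring--Hayman result for CAT($-1$) spaces transferred via Lemma~\ref{scaling metric} with $K=1/\sqrt{-\kappa}$, then upgraded to uniformity. Your extra care in verifying rough starlikeness and the scaling bookkeeping is fine but not strictly needed beyond what the paper already records.
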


\subsection{Hyperbolic fillings}
We briefly recall the construction and the properties of hyperbolic fillings, following \cite{BBS}. We refer readers to \cite[Section 3]{BBS} for more detailed explanations. Note that there are many slightly different definitions and variants of hyperbolic fillings appeared in  \cite{BSc}, \cite{BSa}, \cite{BP}, \cite{B} and \cite{K}. Before defining hyperbolic fillings, we recall some terms.  Given a metric space $(Z, d)$ and $r>0$, a set $E \subseteq Z$ is called an \emph{$r$-separated set} if $d(x, y)\geq r$ for every pair of points $x, y \in E$. The existence of a maximal $r$-separated set is ensured by Zorn's lemma. We say that $(Z, d)$ is \emph{precompact} if the completion of $Z$ is compact. Let $(Z, d)$ be a precompact metric space with $\diam(Z)<1$ and $\alpha, \tau>1$ be given parameters.  Take maximal $\alpha^{-n}$-separated sets $E_n$ with the property $E_n\subseteq E_{n+1}$. Define a vertex set by 
\[
V:= \bigcup V_n, 
\]
where $V_n:=\{(x, n) \ | \ x \in E_n \}$. For $(x, n), (y, m) \in V$, there is an edge between them if and only if either of the following is satisfied:
\begin{enumerate}
	\item $n=m$ and $B_d(x, \tau \alpha^n) \cap B_d(y, \tau \alpha^m) \neq \emptyset$.
	\item $n=m \pm 1$ and $B_d(x,  \alpha^n) \cap B_d(y, \alpha^m)\neq \emptyset$.
\end{enumerate}
 	The metric graph $V$ with edges defined in the above manner is called \emph{a hyperbolic filling of $Z$}, denoted by $X$. Note that a metric graph is a graph whose edges are identified with the unit interval $[0, 1]$. By \cite[Corollary~3.2, Theorem~3.4 and Proposition~4.6]{BBS}, $X$ is a $1/2$-roughly starlike $\delta$-Gromov hyperbolic space for some $\delta=\delta(\alpha, \tau)$. Moreover, it was shown in \cite[Theorem 5.1]{BBS} that the uniformized space $X^{\epsilon}$ of $X$ is a uniform space if $0<\epsilon \leq \text{log}(\alpha)$. We show that $X^{\epsilon}$ is not a uniform space if $Z$ is sufficiently nice and $\epsilon > \text{log}(\alpha)$. 
 	
 	\begin{corollary}\label{hyperbolic filling}
 		Let $\epsilon>\text{log}(\alpha)$. Assume a  precompact metric space $Z$ has at least two points and each pair of points in $Z$ is connected by a curve in $\overline{Z}$ whose length is bounded by some uniform constant. Then $X^{\epsilon}$ is not a uniform space. Therefore, the critical exponent for the uniformized space of the hyperbolic filling $X$ to be a uniform space is $\epsilon=\text{log}(\alpha)$.
 	\end{corollary}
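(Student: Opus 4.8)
The plan is to detect the failure of uniformity of $X^{\epsilon}$ through boundary behaviour, just as in the hyperbolic-space computation above: I will exhibit two non-equivalent Gromov sequences in $X$ whose images converge to the \emph{same} point of $\partial_{d_{\epsilon}}X^{\epsilon}$. Since $X$ is a $1/2$-roughly starlike $\delta$-Gromov hyperbolic space, Theorem~\ref{GH decomposition 1} applies, and producing such a pair shows at once that $\Phi$ is not injective and that the Gehring-Hayman property for metric boundary points fails, hence that $X^{\epsilon}$ is not uniform; together with \cite[Theorem~5.1]{BBS} (uniformity for $0<\epsilon\le\log(\alpha)$) this gives the critical exponent $\log(\alpha)$. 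To set up the pair, note that $\bigcup_n E_n$ is dense in $Z$, so it has at least two points; fix distinct $\zeta,\xi\in\bigcup_n E_n$, let $N$ be such that $\zeta,\xi\in E_n$ for all $n\ge N$ (using nestedness), and put $a_n:=(\zeta,n)$, $b_n:=(\xi,n)\in V_n$ for $n\ge N$. A short count with the edge relations gives $d(p,v)=n$ for every $v\in V_n$ (here $p$ is the unique vertex of $V_0$): one descends level by level for "$\le$", and any path from $V_0$ to $V_n$ has at least $n$ level-increasing edges for "$\ge$". Since $\zeta\in E_n\cap E_{n+1}$, the vertices $(\zeta,n)$ and $(\zeta,n+1)$ are joined by an edge, so the vertical segments are geodesic and $d(a_n,a_m)=|n-m|$; hence $(a_n|a_m)_p=\tfrac12(n+m-|n-m|)=n\wedge m\to\infty$, so $(a_n)_n$ is a Gromov sequence, and likewise $(b_n)_n$. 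Under the standard identification $\partial_{G}X\cong\overline Z$ for hyperbolic fillings (see \cite{BBS}) these sequences represent $\zeta\ne\xi$, hence are non-equivalent; alternatively $(a_n|b_n)_p=n-\tfrac12 d(a_n,b_n)$ stays bounded because any path joining $a_n$ to $b_n$ must rise to a level $m$ with $\alpha^{-m}$ comparable to $d(\zeta,\xi)$, and such $m$ is bounded in $n$.

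The crux is the estimate $d_{\epsilon}(a_n,b_n)\to 0$, and this is where $\epsilon>\log(\alpha)$ is used. Let $\sigma$ be a curve in $\overline Z$ joining $\zeta$ and $\xi$ with $l_d(\sigma)\le L$. Subdivide $\sigma$ into consecutive subarcs of $d$-length at most $c_\tau\alpha^{-n}$, where $c_\tau>0$ depends only on $\tau>1$; there are $N_n\le C_1\alpha^n$ of them with $C_1=C_1(L,\tau)$. Choosing for each subdivision point a cell of $E_n$ within $\alpha^{-n}$ of it and taking $\zeta,\xi$ at the two ends, any two consecutive chosen cells lie within $2\tau\alpha^{-n}$ of each other (this is where $\tau>1$ and the choice of $c_\tau$ enter), so the corresponding level-$n$ vertices are edge-adjacent; concatenating these $N_n$ edges yields a curve $\eta_n$ from $a_n$ to $b_n$ supported on edges between vertices of $V_n$, on which $d(p,\cdot)\ge n-1$ and hence $\rho_{\epsilon}\le e^{-\epsilon(n-1)}$. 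Therefore
\[
d_{\epsilon}(a_n,b_n)\le l_{d_{\epsilon}}(\eta_n)=\int_{\eta_n}\rho_{\epsilon}\,ds\le e^{-\epsilon(n-1)}\,l_d(\eta_n)=e^{-\epsilon(n-1)}N_n\le C_1 e^{\epsilon}\,e^{(\log(\alpha)-\epsilon)n},
\]
which tends to $0$ as $n\to\infty$ because $\log(\alpha)-\epsilon<0$.

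Now I conclude. By the first inequality in \cite[Proof of Lemma~4.10]{BHK}, each of the Gromov sequences $(a_n)_n$, $(b_n)_n$ converges in $(X^{\epsilon},d_{\epsilon})$ to a point of $\partial_{d_{\epsilon}}X^{\epsilon}$, and the displayed estimate forces these limits to coincide; call the common limit $w$. Assume for contradiction that $X^{\epsilon}$ is a uniform space. By Theorem~\ref{GH decomposition 1} the Gehring-Hayman property for metric boundary points holds; applying it to the interlaced sequence $(z_j)_j:=(a_1,b_1,a_2,b_2,\dots)$, which converges to $w\in\partial_{d_{\epsilon}}X^{\epsilon}$, together with the second inequality in \cite[Proof of Lemma~4.10]{BHK}, we get $(z_j|z_l)_p\to\infty$ as $j,l\to\infty$, exactly as in the injectivity argument of Subsection~\ref{equivalence of localized GH}. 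In particular $(a_n|b_n)_p\to\infty$, i.e.\ $(a_n)_n\sim(b_n)_n$, contradicting the previous paragraph. Hence $X^{\epsilon}$ is not uniform, and the critical exponent equals $\log(\alpha)$.

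I expect the main obstacle to be the level-$n$ tracing estimate $N_n\le C_1\alpha^n$ together with the verification that consecutive traced cells are genuinely edge-adjacent for the prescribed filling parameter $\tau>1$; this is the only step using the detailed combinatorics of the filling, and the resulting balance $N_n\,e^{-\epsilon n}\asymp e^{(\log(\alpha)-\epsilon)n}$ is exactly what singles out $\log(\alpha)$ as the threshold. A secondary point requiring care is the non-equivalence of $(a_n)_n$ and $(b_n)_n$, for which one either quotes the boundary identification for hyperbolic fillings from \cite{BBS} or checks directly that a geodesic between the two vertical rays rises only to a level bounded in terms of $d(\zeta,\xi)$.
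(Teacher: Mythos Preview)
Your argument is correct and follows the same strategy as the paper: show that the canonical boundary map $\Phi$ fails to be injective when $\epsilon>\log(\alpha)$, and then invoke Theorem~\ref{GH decomposition 1}. The difference is one of economy. The paper's proof is three lines: it cites \cite[Proposition~4.1]{BBS}, which (under the curve-connectivity hypothesis on $Z$) shows that $\partial_{d_\epsilon}X^\epsilon$ is a single point for $\epsilon>\log(\alpha)$; since $Z$ (hence $\partial_G X$) has more than one point, $\Phi$ cannot be bijective, and Theorem~\ref{GH decomposition 1} finishes. Your level-$n$ tracing of a curve in $\overline Z$ and the resulting estimate $d_\epsilon(a_n,b_n)\le C\,e^{(\log(\alpha)-\epsilon)n}$ are essentially the content of that cited proposition, worked out by hand for two specific boundary points rather than for all of them at once. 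What you gain is self-containment; what the paper gains is brevity. The two points you flag are genuine but routine: adjacency of consecutive traced cells only needs $c_\tau<2(\tau-1)$, and for the non-equivalence of $(a_n)_n$ and $(b_n)_n$ your citation of the boundary identification in \cite{BBS} is entirely sufficient---the alternative direct estimate would need slightly more care (a path from $a_n$ to $b_n$ must \emph{descend}, not rise, to a level $m$ bounded in terms of $d(\zeta,\xi)$).
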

 	\begin{proof}
 		By our assumption, we know that the metric boundary of the uniformized space of $X$ with the parameter $\epsilon > \text{log}(\alpha)$ consists of one point by \cite[Proposition 4.1]{BBS}. Since $Z$ has at least two points, there is no way to construct a bijective map between $\partial_{G}X$ and $\partial_{d_{\epsilon}}X^{\epsilon}$. Theorem~\ref{GH decomposition 1} tells us that the uniformized space $X^{\epsilon}$ of a hyperbolic filling $X$ is not a uniform space.  
 	\end{proof}
\begin{remark}
	It is well-known that metric trees are $0$-Gromov hyperbolic spaces. Although Corollary \ref{hyperbolic filling} tells us that the uniformized space of a hyperbolic filling is not a uniform space for $Z$ sufficiently nice and  $\epsilon > \text{log}(\alpha)$, the uniformized space of a metric tree is always a uniform space for every $\epsilon>0$. This is due to the fact that the Gehring-Hayman theorem always holds for any $\epsilon>0$. 
\end{remark}

}

\end{document}